\documentclass[11pt]{article}

\usepackage{hyperref}
\usepackage{enumerate, amsthm}
\usepackage{mathrsfs}
\usepackage{rotating}
\usepackage{cancel}
\usepackage{xcolor}
\usepackage[margin= 0.8 in]{geometry}
\usepackage[margin={1.5cm, 1.5cm},font=small, labelsep=endash]{caption}
\usepackage{mathtools}
\usepackage{ifthen}

      \usepackage{amssymb}
      \usepackage{amsmath}
      \newcommand\myeq{\mathrel{\stackrel{\makebox[0pt]{\mbox{\normalfont\tiny d}}}{=}}}
   %
      \numberwithin{equation}{section}
      \theoremstyle{plain}
      \newtheorem{theorem}{Theorem}[section]
      \newtheorem{lemma}[theorem]{Lemma}
      \newtheorem{corollary}[theorem]{Corollary}
      \newtheorem{proposition}[theorem]{Proposition}

      \theoremstyle{definition}

      \theoremstyle{remark}
      \newtheorem{remark}[theorem]{Remark}


      \newcommand{\R}{{\mathbb R}}
      \newcommand{\E}{\mathbb E}

      \renewcommand{\P}{\mathbb P}
      
      \newcommand{\var}{\mathrm{Var}}

      \newcommand{\F}{\mathbb{F}}

      \renewcommand{\H}{\mathcal H}
      \makeatletter
      \def\@setcopyright{}
      \def\serieslogo@{}
      \makeatother
      
\begin{document}
\title{Finite size scaling of random XORSAT}

\author{Subhajit Goswami\\
University of Chicago
}
\date{}

\maketitle

\begin{abstract}
We consider a ``configuration model'' for random XORSAT which is a random system of $n$ equations over $m$ variables in $\F_2$. Each equation is of the form $y_1 + y_2 + \cdots + y_k = b$ where $k \geq 3$ is fixed, $y_1, y_2, \cdots$ are variables (not necessarily 
distinct) and $b \in \F_2$. The equations are chosen independently and uniformly at random with replacement. 
It is known \cite{Dubois02, Dietzfelbinger10, pittel2016} that there exists $\rho_k$ such that $m / n = \rho_k$ is a sharp threshold 
for the satisfiability of this system. In this note we show that for the configuration model, the width of SAT-UNSAT transition window for random $k$-XORSAT is $\Theta(n^{-1/2})$ and also derive the exact scaling function.\newline

\smallskip
\noindent{\bf Key words and phrases.} Random $k$-XORSAT, Random constraint satisfaction problems, Finite size scaling.
\end{abstract}

\section{Introduction}
Consider a random instance of a system of $n$ equations over $m$ variables in $\F_2$ as 
follows. The $j$-th equation ($j \in [n]$) is of the form $y_1^j + y_2^j + \cdots + y_k^j = b^j$, where $y_1^j, y_2^j, \cdots, y_k^j$ are chosen independently and uniformly from the variables $x_1, x_2, \cdots, x_m$; $b_j$ is uniform in $\{0, 1\}$ and is 
independent with $y_1^j, y_2^j, \cdots, y_k^j$. The tuples $(y_1^j, y_2^j, \cdots, y_k^j; b_j)$'s are also 
independent. We refer to this system as $\mathcal E_{k, 
m, n}$. A natural object of interest related to $\mathcal E_{k, m, n}$ is:
$$P_k(m, n) = \P(\mathcal E_{k, m, n}\mbox{ is solvable in }\F_2)\,.$$
It is known that $P_k(m, n)$ exhibits a sharp phase transition around a critical value $\rho_k$ of the ratio 
$m/n$ when $k \geq 3$. More precisely, $\lim_{m, n \to \infty; m/n \to \rho}P_k(m, n) = 1$ or 0 accordingly as 
$\rho > \rho_k$ or $< \rho_k$ respectively. This was first shown by Dubois and Mandler \cite{Dubois02} for $k = 3$ and independently by Dietzfelbinger et. al. \cite{Dietzfelbinger10} and Pittel and Sorkin \cite{pittel2016} for all $k \geq 3$. In this paper we determine the finite size scaling behavior of $P_k(m, n)$ around the threshold 
$\rho_k$. Our main result is the following theorem.
\begin{theorem}
\label{thm-main}
Let $k \geq3$ and $m = \lfloor n\rho_k + rn^{1/2}\rfloor$ 
for some $r \in \R$. There exist positive numbers 
$s_k, C^*_k$ depending only on $k$ and a positive constant $c^*$ such that for all large enough $n$
\begin{equation*}
\label{eq:thm-main}
\big|P_k(m, n) - \Phi(rs_k)\big| \leq C^*_kn^{-c^*}\,,
\end{equation*}
where $\Phi(.)$ is the standard Gaussian distribution function.
\end{theorem}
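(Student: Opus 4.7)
The plan is to reduce the satisfiability question for $\mathcal{E}_{k,m,n}$ to a Gaussian CLT for a simple integer statistic of the $2$-core of the associated random hypergraph. Associate to $\mathcal{E}_{k,m,n}$ the multi-hypergraph $H$ on $\{x_1,\ldots,x_m\}$ whose $n$ hyperedges are the multisets $\{y_1^j,\ldots,y_k^j\}$. If some variable appears with total multiplicity $1$ in $H$, say only in equation $E_j$, then $E_j$ can always be satisfied by choosing that variable appropriately, so removing $E_j$ together with the variable preserves satisfiability; iterating this peeling step leaves the $2$-core $\mathcal{C}$ in which every remaining variable has multiplicity at least $2$. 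Write $n^*, m^*$ for its numbers of equations and variables, and $A^*$ for its coefficient matrix over $\F_2$. Since the right-hand sides of the equations in $\mathcal{C}$ remain uniform and independent conditionally on the hypergraph structure,
\begin{equation*}
P_k(m,n) = \E\big[2^{-d(\mathcal{C})}\big], \qquad d(\mathcal{C}) = n^* - \mathrm{rank}_{\F_2}(A^*).
\end{equation*}

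The next ingredient is a quantitative joint CLT for $(n^*,m^*)$. Using Wormald's differential equation method for the peeling dynamics, or the explicit generating functions of \cite{pittel2016,Dietzfelbinger10}, one shows $(n^*,m^*) = n(\bar{n}(m/n),\bar{m}(m/n)) + n^{1/2}(X,Y) + O_{\P}(1)$ for smooth functions $\bar{n},\bar{m}$ and jointly Gaussian $(X,Y)$. The threshold characterization in \cite{pittel2016} is equivalent to the balance condition $\bar{n}(\rho_k) = \bar{m}(\rho_k)$. Expanding to first order, for $m = \lfloor n\rho_k + rn^{1/2}\rfloor$ the excess $\Delta := m^* - n^*$ satisfies $\E[\Delta] = \alpha r n^{1/2} + O(1)$ and $\mathrm{Var}(\Delta) = \beta n + o(n)$ for explicit constants $\alpha,\beta > 0$ depending only on $k$. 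A Berry-Esseen-type bound then gives
\begin{equation*}
\sup_{t \in \R}\,\Big|\P(\Delta \leq t) - \Phi\big((t - \alpha r n^{1/2})/\sqrt{\beta n}\big)\Big| \leq C n^{-c}.
\end{equation*}

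The structural step is to show that $A^*$ attains its maximal rank $\min(n^*,m^*)$, equivalently $d(\mathcal{C}) = (n^* - m^*)^+$, with probability $1 - O(n^{-c})$. Extra linear dependences correspond to ``even subgraphs'' of $\mathcal{C}$ beyond those forced by the rank-nullity count; a first/second-moment calculation, exploiting that every variable in $\mathcal{C}$ has degree at least $2$, shows that such obstructions are polynomially rare. Combining with the previous step,
\begin{equation*}
P_k(m,n) = \E\big[2^{-(n^* - m^*)^+}\big] + O(n^{-c}).
\end{equation*}
Because $x \mapsto 2^{-x^+}$ differs from $\mathbf{1}(x \leq 0)$ only in a window of size $O(1)$, and $(n^* - m^*)/n^{1/2}$ has a smooth approximately Gaussian density, this window contributes $O(n^{-1/2})$ after taking expectation, so that
\begin{equation*}
P_k(m,n) = \P(\Delta \geq 0) + O(n^{-c^*}) = \Phi(r s_k) + O(n^{-c^*}), \qquad s_k = \alpha/\sqrt{\beta}.
\end{equation*}

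I expect the main obstacle to be obtaining the two polynomial error rates simultaneously. The Berry-Esseen estimate for the joint law of the $2$-core sizes requires tracking the peeling dynamics more finely than for mere distributional convergence, likely via Stein's method or sharp moment computations on the configuration model. The full-rank event for $A^*$ is subtle because near $\rho_k$ the naive first-moment count of even subgraphs is of order $1$, so some cancellation or refined moment argument, tied to the specific degree profile of the $2$-core, will be required to upgrade the $o(1)$ control from \cite{pittel2016,Dietzfelbinger10} to a genuine polynomial rate.
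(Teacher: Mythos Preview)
Your plan is structurally the same as the paper's: reduce to the $2$-core by peeling, obtain a quantitative CLT for $m^*-n^*$, and then translate ``sign of $m^*-n^*$'' into satisfiability of the core system. Two comments on how the paper actually executes the two ingredients you flag as obstacles.

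First, the full-rank step is not something you need to reprove. Pittel--Sorkin's Theorem~2 (quoted here as Theorem~\ref{thm:Pit}) already gives, for the constrained configuration model, that the core system is satisfiable with probability $1-O(n^{-(k-2)}+e^{-c^\star\omega(n)})$ when $m^*\ge n^*+\omega(n)$ and unsatisfiable with probability $1-O(2^{-\omega(n)})$ when $n^*\ge m^*+\omega(n)$. Taking $\omega(n)=A\log n$ gives exactly the polynomial error you want; the paper simply cites this. Your proposed first/second-moment reproof would be reinventing their argument.

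Second, for the CLT on $m^*-n^*$ with Berry--Esseen rate, Wormald's differential-equation method will not suffice: it yields concentration of order $\sqrt{n\log n}$ around the ODE trajectory but no distributional limit, and the generating-function route in \cite{pittel2016} was not pushed to a quantitative CLT for the difference. The paper instead adapts the Dembo--Montanari machinery for core fluctuations: one replaces the true peeling kernel by a smooth approximate kernel (Lemma~\ref{lem:coupling}), shows concentration around the ODE $\overrightarrow{y}(\theta,\rho)$, then in a window of width $n^\beta$ around $\theta_k$ linearises the chain so that $\overrightarrow{z}'(\tau)$ becomes a sum of bounded independent vectors (Proposition~\ref{prop:5.5} and \eqref{eq:2.14}). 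A Sakhanenko/Zaitsev strong approximation then gives the Gaussian limit with polynomial error (Lemma~\ref{lem:6.1}), and a short hitting-time argument (Lemma~\ref{lem:6.2}) transfers this to $(m_{\mathrm{core}},n_{\mathrm{core}})$. The specific analytic fact that makes this work at $\rho_k$ rather than at the core-emergence threshold $\rho_c$ is Proposition~\ref{prop:coro_crit}: at $\rho=\rho_k$ the first zero $\theta_k$ of $y_1$ satisfies $y_2(\theta_k,\rho_k)=1-\theta_k$ and $y_1'(\theta_k)<0$, so the hitting time is simple and the linearisation is nondegenerate. If you attempt your outlined route, this ODE analysis and the strong-approximation coupling are the pieces you will need to supply; the rest of your sketch (including the $2^{-x^+}$ versus indicator replacement) goes through, and indeed the paper absorbs that last step by applying Theorem~\ref{thm:Pit} with $\omega(n)=A\log n$ directly inside Proposition~\ref{prop:prep}.
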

\subsection{Backgrounds and related works}
The existence of a sharp threshold for satisfiability of random $k$-XORSAT for general $k \geq 3$ was established separately by Dietzfelbinger et. al. \cite{Dietzfelbinger10} and Pittel 
and Sorkin \cite{pittel2016}. Their approaches are somewhat different and for the purpose of this paper we 
will discuss the latter work in particular. In their paper Pittel and Sorkin first derived a similar threshold for what they called a ``constrained'' $k$-XORSAT model (introduced by Dubois and Mandler in \cite{Dubois02}) where the system of equations is uniformly random over the subclass of systems in which each variable appears at 
least twice. They found the critical ratio of the number of variables to number of equations for this constrained 
setup as 1. The threshold $\rho_k$ for unconstrained model was then derived as the variables-equations ratio such that the same in the \emph{core} of the associated hypergraph approaches 1 in probability as $n$ becomes 
large (see section~\ref{sec:sketch}, first paragraph for a detailed 
discussion). They refined their result for the constrained setup (\cite[Theorem~2]{pittel2016}) to include cases when $m - n$ wanders 
off to $\infty$ or $-\infty$ arbitrarily slowly. This sharp transition implies the scaling factor for the transition window should be $n^{-1/\nu}$ if the typical fluctuation in the difference between number of variables and equations 
in the core is $\Theta(n^{1 - 1/\nu})$ for some $\nu > 1$. 
As a candidate for the scaling exponent $\nu$, numerical simulations \cite{leone2001} seem to suggest the value 
$\nu = 2$ for $k = 3$. This is also the lower bound (assuming existence) proved in \cite{Wilson02} for 
a class of problems including random XORSAT. Among other problems in the class perhaps the most relevant for us is the appearance of a non-empty core in random $k$-uniform 
hypergraphs. For this problem the lower bound was indeed found to be the true scaling exponent (see \cite{AMRU09, 
DM08}). A refined scaling law was established by Dembo and Montanari in \cite{DM08} where they analyzed a naive 
algorithm for obtaining the core. In the same paper they remarked (see \cite[Remark~2.6]{DM08}) that their techniques were applicable to a wide variety of properties of the core ``in the scaling regime'' $\rho = \rho_c + rn^{-1/2}$ where $\rho_c$ is the threshold for 
the appearance of core. In this note we use the tools developed in \cite{DM08} to determine the asymptotic distribution of the difference between number of variables and equations in the core (see Proposition~\ref{prop:prep}) in a \textbf{different regime} namely when $\rho_n$ approaches the 
satisfiability threshold $\rho_k$. Combined with the result of \cite{pittel2016} this then yields the scaling law of random $k$-XORSAT.
\begin{remark}
\label{remark:1} The usual random model for $k$-XORSAT is where each of the $n$-equations' $k$ variables are drawn uniformly \emph{without replacement} from the set 
of all $m$ variables. In contrast we work with a \emph{configuration model} in this paper which allows 
same variable to appear multiple times in an equation. A similar model was considered in \cite{pittel2016} for 
the constrained set up (see section~3). In fact the bounds in \cite[Theorem~2]{pittel2016} were first derived for this configuration model which has the critical threshold 1 and the uniformly random set up was subsequently tackled using them (see Lemma~7 and 
Corollary~8). As a result the critical threshold for our 
model is same as $\rho_k$. The main difficulty with the uniformly random model is the analysis of the core and is purely technical in nature (see 
\cite[Remark~2.7]{DM08} for more discussions on this). 
However we expect the finite size scaling behavior to be identical for these two models up to a renormalization of the scaling function.
\end{remark}

\subsection{A word on the organization}
In section~\ref{sec:sketch}, we discuss the results from \cite{DM08} and \cite{pittel2016} that are relevant 
for the current paper and provide a sketch of our proof. In section~\ref{sec:ODE} we discuss the properties of some ODE's which describe the evolution of a Markov process associated with the systems of equations 
\emph{leading} to the core. Finally in section~\ref{sec:approximation}, we derive the asymptotic distribution of the difference between number of variables and equations in the core.

\medskip

\noindent {\bf Acknowledgement.} We thank Jian Ding, Amir Dembo, Yash Kanoria and Steve Lalley for helpful discussions. We especially thank Jian Ding for suggesting the problem and carefully reviewing an early manuscript of the paper.

\section{The discussion of previous results and the sketch of proof}
\label{sec:sketch}
Central to our analysis is the so called \emph{peeling} algorithm which removes equations from the system one at a time as long as there is a variable appearing exactly once. Before we make it precise, let us link the system to a 
$k$-uniform \emph{directed} hypergraph with $m$ vertices and $n$ hyperedges. We do this by identifying the variables $x_1, x_2, \cdots, x_m$ with vertices $v_{x_1}, v_{x_2}, \cdots, 
v_{x_m}$ of a hypergraph $\H_{k, m, n}$ and then including, for each equation $y_1^j + y_2^j + \cdots + y_k^j = b^j$ in the system, the \emph{ordered} list of vertices $\big(v_{y_1}, 
v_{y_1}, \cdots, v_{y_k}\big)$ as a hyperedge. The degree of a vertex in $\H_{k, m, n}$ (or the variable it corresponds to) is the total number of times it occurs in all the hyperedges of 
$\H_{k, m, n}$ counting repetitions. The \emph{2-core} (or simply the \emph{core}) of  $\H_{k, m, n}$ is the maximal subhypergraph such that minimum degree of vertices in it is at least 2. 
In terms of $\mathcal E_{k, m, n}$ it is the largest subsystem of equations such that any variable appearing in it appears at least 
twice. We will also refer to this subsystem as the core and the particular usage should be clear from the context.

At each step the peeling algorithm removes an equation from the system which it picks 
uniformly from the set of all equations containing at least one variable of degree 1. It is 
easy to see that this algorithm stops at the core. We can naturally associate a $\mathbb{Z}^2$-valued process $\{\overrightarrow{z}(\tau)=(z_1(\tau),z_2(\tau)), n \geq \tau \geq 0\}$ with this algorithm, where $z_1(\tau)$ and $z_2(\tau)$ are respectively the number of variables of degree 1 and $\geq 2$ after $\tau$ steps. As proved in \cite[Lemma~3.1]{DM08}, this process is a time inhomogeneous Markov Process. Let us denote its transition probabilities as $W_\tau^+(\Delta\overrightarrow{z}|\overrightarrow{z}) = \mathbb{P}(\overrightarrow{z}(\tau + 1) = \overrightarrow{z} + \Delta\overrightarrow{z}|\overrightarrow{z}(\tau) = \overrightarrow{z})$. For large $n$ and ``suitable'' range of $\tau$ this transition kernel is well approximated (\cite[Lemma 4.5]{DM08}) by a simpler transition kernel given by,
\begin{eqnarray}
\label{eq:2.1}
\widehat \P(\overrightarrow{z}(\tau + 1) = \overrightarrow{z} + (q_1 - q_0, -q_1)|\overrightarrow{z}(\tau) = \overrightarrow{z})
= \binom{k-1}{q_0 - 1, q_1, q_2}\mathfrak{p}_0^{q_0 - 1}\mathfrak{p}_1^{q_1}\mathfrak{p}_2^{q_2},
\end{eqnarray}
where $q_0 + q_1 + q_2 = k$ and $\mathfrak p_0, \mathfrak p_1, \mathfrak p_2$ are defined as follows. For $\overrightarrow{x} = \overrightarrow{z}/n, \theta = \tau/n$,
\begin{equation}
\label{eq:2.2}
\mathfrak{p}_0 = \frac{\mbox{max}(x_1, 0)}{k(1-\theta)}, \mathfrak{p}_1 = \frac{x_2\lambda^2}{k(1-\theta)(e^\lambda - 1 - \lambda)}, \mathfrak{p}_2 = \frac{x_2\lambda}{k(1-\theta)},
\end{equation}
where for $x_2 > 0$, $\lambda$ is the unique positive solution of
\begin{equation}
\label{eq:2.3}
f_1(\lambda) \equiv \frac{\lambda(e^\lambda - 1)}{e^\lambda - 1 - \lambda} = \frac{k(1-\theta)- \max(x_1,0)}{x_2}
\end{equation}
so that $\mathfrak{p}_0 + \mathfrak{p}_1 + \mathfrak{p}_2 = 1$ whereas for $x_2 = 0$, $\mathfrak{p}_1$ and $\mathfrak{p}_2$ are set at $0$ and $1 -\mathfrak{p}_0$ respectively by continuity. We denote this transition kernel by $\widehat{W}_{\theta}(\Delta\overrightarrow{z}|\overrightarrow{x})$. In fact the definition can be extended to include all vectors in $\overrightarrow{x} \in \mathbb{R}^2$ by defining
$$\widehat{W}_{\theta}(\Delta\overrightarrow{z}|\overrightarrow{x}) = \widehat{\mathbb{P}}(\overrightarrow{z}(\tau + 1) = \overrightarrow{z} + (q_1 - q_0, -q_1)|\overrightarrow{z}(\tau)/n = K_{\theta}(\overrightarrow{x})),$$
where for each $\theta\in[0, 1)$ $K_{\theta}:\mathbb{R}^2 \rightarrow \mathcal{K}_{\theta}$ denotes the projection onto the convex set $\mathcal{K}_{\theta}\equiv\{\overrightarrow{x}\in \mathbb{R}_+^2:x_1 + 2x_2\leq k(1 - 
\theta)\}$. A consequence of this approximation is that we can couple the Markov processes with kernels $W_\tau^+(.|\overrightarrow{z})$ and $\widehat{W}_{\tau/n}(\Delta\overrightarrow{z}|\overrightarrow{z}/n)$ (and the same initial 
conditions) while keeping their ``graphs'' close to each other with high probability. In order to state this result precisely we need to introduce some notations and definitions. Let $\mathcal{G}_k(n,m)$ denote the collection of all possible instances of $\H_{k, m, n}$ and $\mathbb{P}_{\mathcal{G}_k(n,m)}(.)$ denote the uniform distribution on the set. Define two $\mathbb{Z}^2$-valued Markov chains with distributions $\mathbb{P}_{n,\rho}(.)$ and $\widehat{\mathbb{P}}_{n,\rho}(.)$ respectively as follows:$$\mathbb{P}_{n,\rho}(\overrightarrow{z}(0)=\overrightarrow{z}) = \widehat{\mathbb{P}}_{n,\rho}(\overrightarrow{z}(0)=\overrightarrow{z})=\mathbb{P}_{\mathcal{G}_k(n,m)}(\overrightarrow{z}(G)=\overrightarrow{z}),$$
if $\overrightarrow{z}\in\mathbb{Z}_+^2$ such that $z_1 + 2z_2 \leq nk$, and $0$ otherwise. Coming to the transition kernels define
$$W_{\tau}(\Delta\overrightarrow{z}|\overrightarrow{z}) = \begin{cases} 
W_{\tau}^+(\Delta\overrightarrow{z}|\overrightarrow{z}) & \mbox{if } z_1\geq1,n^{-1}\overrightarrow{z}\in\mathcal{K}_{\tau/n},\\ 
\widehat{W}_{\tau/n}(\Delta\overrightarrow{z}|n^{-1}\overrightarrow{z}) & \mbox{otherwise}\,.
\end{cases}$$
Now evolve the two Markov chains according to:
\begin{eqnarray*}
\mathbb{P}_{n,\rho}\big(\overrightarrow{z}(\tau + 1) = \overrightarrow{z} + \Delta\overrightarrow{z}|\overrightarrow{z}(\tau) = \overrightarrow{z}\big) = W_{\tau}(\Delta\overrightarrow{z}|\overrightarrow{z})\,, \mbox{ and}\\
\widehat{\mathbb{P}}_{n,\rho}\big(\overrightarrow{z}(\tau + 1) = \overrightarrow{z} + \Delta\overrightarrow{z}|\overrightarrow{z}(\tau) = \overrightarrow{z}\big) = \widehat{W}_{\tau/n}(\Delta\overrightarrow{z}|n^{-1}\overrightarrow{z})
\end{eqnarray*}
for $\tau = 0,1,\ldots,n-1$. Our original Markov process is not quite same as the one 
associated with $\mathbb{P}_{n,\rho}(.)$. However, they coincide until the first time $\tau$ such that $z_1(\tau) = 0$, i.e., when the peeling algorithm terminates at the core. For this reason whenever we mention the process $\{\overrightarrow{z}(\tau)\}$ without any reference to its distribution, the latter 
is implicitly understood to be $\mathbb{P}_{n,\rho}$. Now we can state the lemma we were looking for.
\begin{lemma}\cite[Lemma~5.1]{DM08}
\label{lem:coupling}
There exist finite $C_* = C_*(k,\epsilon)$ and positive $\lambda_* = \lambda_*(k, \epsilon)$, and a coupling between $\{\overrightarrow{z}(\tau)\}$
and $\{\overrightarrow{z}'(\tau)\}\myeq\widehat{\mathbb{P}}_{n,\rho}(.)$, such that for any $n$, $\rho \in [\epsilon, 1/\epsilon]$ and $r > 0$,
\begin{equation*}
\label{eq:approx_kernel4}
\mathbb{P}\big(\sup\limits_{\tau \leq \tau_*}||\overrightarrow{z}(\tau) - \overrightarrow{z}'(\tau)||>r\big) \leq C_*e^{-\lambda_*r}\,,
\end{equation*}
where $\tau_* \leq n$ denotes the first time such that $(\overrightarrow{z}(\tau_*),\tau_*) \notin \mathcal{Q}(\epsilon)$ and for each $\epsilon > 0$,
\begin{eqnarray*}
\mathcal{Q}(\epsilon)\equiv\{(\overrightarrow{z},\tau):-nk + n\epsilon \leq z_1; n\epsilon<z_2; 0 \leq \tau \leq n(1 - \epsilon); n\epsilon\leq(n - \tau)k - \max(z_1,0) - 2z_2\}\,.
\end{eqnarray*}
\end{lemma}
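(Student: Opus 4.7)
I would construct the coupling greedily: at each time $\tau$ apply a maximal coupling between the true transition kernel $W_\tau^+(\cdot\,|\,\overrightarrow{z}(\tau))$ driving the physical chain and the smoothed kernel $\widehat{W}_{\tau/n}(\cdot\,|\,\overrightarrow{z}'(\tau)/n)$ driving the idealized companion chain $\overrightarrow{z}'$. Each kernel is supported in a ball of radius $\le 2k$ around its base state, since removing one hyperedge shifts $(z_1,z_2)$ by at most $k$ in each coordinate. Hence whenever the coupling succeeds the Euclidean discrepancy $D_\tau \equiv \|\overrightarrow{z}(\tau) - \overrightarrow{z}'(\tau)\|$ is preserved, while each coupling failure increases it by at most $2k$. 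Letting $N_\tau$ count coupling failures up to time $\tau$, we therefore have $\sup_{\tau \le \tau_*} D_\tau \le 2k\,N_{\tau_*}$, and the task reduces to an exponential tail bound on $N_{\tau_*}$.

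\textbf{Key estimates.} Two quantitative ingredients drive the analysis. The first is a direct comparison at the same state,
\begin{equation*}
\|W_\tau^+(\cdot\,|\,\overrightarrow{z}) - \widehat{W}_{\tau/n}(\cdot\,|\,\overrightarrow{z}/n)\|_{TV} \le C(\epsilon)/n \quad \text{on } \mathcal{Q}(\epsilon),
\end{equation*}
which should be provable along the lines of \cite[Lemma~4.5]{DM08} by comparing the exact conditional degree profile of the $n-\tau$ untouched hyperedges with the truncated-Poisson ansatz underlying \eqref{eq:2.1}--\eqref{eq:2.2}, whose residues are $O(1/n)$ uniformly on $\mathcal{Q}(\epsilon)$. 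The second is a Lipschitz estimate in the state argument: applying the implicit function theorem to \eqref{eq:2.3} yields smooth dependence of $\lambda$, and hence of $\mathfrak p_0, \mathfrak p_1, \mathfrak p_2$ via \eqref{eq:2.2}, on $(\overrightarrow{x}, \theta) \in \mathcal{Q}(\epsilon)$, giving $\|\widehat{W}_\theta(\cdot\,|\,\overrightarrow{x}) - \widehat{W}_\theta(\cdot\,|\,\overrightarrow{x}')\|_{TV} \le L(\epsilon)\|\overrightarrow{x}-\overrightarrow{x}'\|$. Combining these by the triangle inequality, the maximal coupling can be arranged so that the one-step failure probability is $p_\tau \le (C + L D_\tau)/n \le (C + 2kLN_\tau)/n$ throughout $\mathcal{Q}(\epsilon)$.

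\textbf{Tail estimate for $N_{\tau_*}$.} The process $N_\tau$ is then stochastically dominated by the pure-birth chain $\widetilde{N}_\tau$ that, given $\widetilde{N}_\tau = j$, jumps to $j+1$ at the next step with probability $(C+2kLj)/n$ and stays at $j$ otherwise, run for at most $n$ steps. Conditional on $\widetilde{N}_\tau = j$, the waiting time to the next jump is a geometric with parameter $(C+2kLj)/n$, so reaching level $J$ by time $n$ requires $\sum_{j=0}^{J-1} T_j \le n$ for independent geometrics $T_j$ whose expectations grow like $n/(2kLj)$. A standard Chernoff/large-deviation bound on this sum, optimized in the exponential-moment parameter, yields $\mathbb{P}(\widetilde{N}_n \ge J) \le C_* e^{-\lambda_* J}$ with constants depending only on $k, L(\epsilon), C(\epsilon)$, which translates via $D_\tau \le 2k N_\tau$ to the stated bound.

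\textbf{Main obstacle.} The hard point is precisely the feedback of $D_\tau$ into the failure probability through the Lipschitz term, which rules out a naive $n$-step union bound on a deterministic one-step error $O(1/n)$ and forces the passage through stochastic domination by a birth chain with linearly growing jump rate. The uniformity of $C_*, \lambda_*$ over $\rho \in [\epsilon, 1/\epsilon]$ is then a matter of tracking the $\epsilon$-dependence of $C(\epsilon)$ and $L(\epsilon)$, both of which come from quantitative smoothness bounds for $\lambda(\overrightarrow{x}, \theta)$ and $\mathfrak p_i(\overrightarrow{x}, \theta)$ that are uniform on the compact region $\mathcal{Q}(\epsilon)$.
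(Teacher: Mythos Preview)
The paper does not prove this lemma; it is quoted verbatim from \cite[Lemma~5.1]{DM08} and used as a black box. Your sketch is in fact the approach of the original proof in \cite{DM08}: the two pillars are exactly the $O(1/n)$ kernel comparison of \cite[Lemma~4.5]{DM08} and the Lipschitz continuity of $\mathfrak p_a$ (Lemma~\ref{lem:4.1} here), combined through a step-by-step maximal coupling so that the discrepancy grows only at coupling failures and the failure probability at step $\tau$ is bounded by $(C + L D_\tau)/n$.

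One point that deserves a bit more care than ``a standard Chernoff bound'' is the tail of the dominating birth chain $\widetilde N$. Because the jump probability $(C+2kLj)/n$ is \emph{state-dependent} and grows linearly in $j$, a direct exponential-martingale computation on $e^{\lambda \widetilde N_\tau}$ is awkward. The clean way (and the way \cite{DM08} organizes it) is to recognize that $\widetilde N$ is the discrete-time analogue of a linear birth (Yule) process with immigration, run for unit time; for such a process the time-1 population is a shifted geometric, hence has genuinely exponential tails with rate $-\log(1-e^{-2kL})$, and the discretization error is uniform in $n$. Equivalently, one can bound $\P(\widetilde N_n\ge J)$ by $\P(T_{J/2}+\cdots+T_{J-1}\le n)$ and use that each $T_j$ for $j\ge J/2$ is stochastically at least a geometric of mean $n/(C+kLJ)$, giving a product of $J/2$ independent events each of probability bounded away from~1. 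Either route yields the claimed $C_* e^{-\lambda_* J}$ with constants depending only on $k,\epsilon$. A second technicality you glossed over is that the Lipschitz bound on $\widehat W_\theta(\cdot\,|\,\overrightarrow x)$ is only available on the compact $\widehat q(\epsilon)$, so one must argue that $\overrightarrow z'(\tau)/n$ stays in (a slight enlargement of) this set up to $\tau_*$; this is handled either by the projection $K_\theta$ built into the definition of $\widehat W$ or by a stopping-time argument, and \cite{DM08} does the former.
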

Another important ingredient is the asymptotic joint distribution of number of degree 1 and 2 vertices 
in $\H_{k, m, n}$. For $\overrightarrow{\mu}\in \mathbb{R}^d$ and a positive definite $d$-dimensional matrix $\Sigma$, denote the $d$-dimensional Gaussian density of mean $\overrightarrow{\mu}$ and covariance $\Sigma$ by $\phi_d(.|\overrightarrow{\mu};\Sigma)$. Entries of the vector $\overrightarrow{z} = (z_1, z_2)$ denote the number of vertices of degree 1 and at least 2 respectively in a random graph drawn uniformly from $\mathcal{G}_k(n,\lfloor n\rho\rfloor)$. Define
\begin{equation}
\label{eq:intial_cond_vec}
\overrightarrow{y} = \overrightarrow{y}(\rho) = (k e^{-k/\rho},\rho(1 - e^{-k/\rho})-k e^{-k/\rho})\,,
\end{equation}
and denote by $\mathbb{Q} = \mathbb{Q}(\rho)$ some positive definite matrix which we will specify later. Then,
\begin{lemma}\cite[Lemma~4.4]{DM08}
\label{lem:normal_approx}
For any $\epsilon >0 $ there exist positive 
constants $\kappa_0, \kappa_1, \kappa_2, \kappa_3$, such that for all $n$, $r$, and $\rho\in[\epsilon, 1/\epsilon]$,
$$||\mathbb{E}\overrightarrow{z} - n\overrightarrow{y}|| \leq \kappa_0\,,$$
$$\mathbb{P}(||\overrightarrow{z} - \mathbb{E}\overrightarrow{z}||\geq r) \leq \kappa_1e^{-r^2/\kappa_2n}\,,$$
and
$$\sup\limits_{\overrightarrow{u}\in \mathbb{R}^2}\sup\limits_{x\in \mathbb{R}}\bigl\lvert\mathbb{P}(\overrightarrow{u}.\overrightarrow{z}\leq x)-\int\limits_{\overrightarrow{u}.\overrightarrow{z}\leq x}\phi_2(\overrightarrow{z}|n\overrightarrow{y};n\mathbb{Q})d\overrightarrow{z}\bigr\rvert \leq \kappa_3n^{-1/2}\,.$$
\end{lemma}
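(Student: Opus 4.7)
I would treat the three assertions separately, after identifying $(z_1, z_2)$ with a function of the degree sequence $(d_1, \ldots, d_m)$ of the configuration-model hypergraph $\H_{k,m,n}$: since the $nk$ half-edges are placed i.i.d.\ uniformly into the $m$ variable slots, $(d_1, \ldots, d_m) \sim \mathrm{Multinomial}(nk, (1/m, \ldots, 1/m))$. The mean bound is then immediate from the closed forms $\mathbb{E}[z_1] = nk(1 - 1/m)^{nk-1}$ and $\mathbb{E}[z_0] = m(1-1/m)^{nk}$, followed by $z_2 = m - z_0 - z_1$ and a Taylor expansion of $\log(1-1/m)$ that is uniform for $\rho \in [\epsilon, 1/\epsilon]$; the target $n\vec y(\rho)$ emerges with additive remainder $O_\epsilon(1)$. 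For the concentration bound, one views $(z_1, z_2)$ as a symmetric function of the $nk$ i.i.d.\ half-edge choices; reassigning any single one alters each coordinate by at most $2$, so McDiarmid's bounded differences inequality delivers the sub-Gaussian tail with variance $O(n)$ directly.

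For the Gaussian approximation I would Poissonize. Let $D_1, \ldots, D_m$ be i.i.d.\ Poisson with parameter $\mu = nk/m$, and set $V_i = (\mathbb{1}[D_i = 1], \mathbb{1}[D_i \geq 2], D_i) \in \mathbb{Z}^3$, $(Z_1, Z_2, S) = \sum_{i=1}^m V_i$. Conditional on $S = nk$, $(D_1, \ldots, D_m)$ has exactly the multinomial law, so it suffices to establish a bivariate conditional normal approximation for $(Z_1, Z_2)$ given $S = nk$ with Kolmogorov error $O(n^{-1/2})$, uniform in the projection direction $\vec u$. The $V_i$ are i.i.d.\ with all moments finite and with a covariance $\Sigma$ that is non-degenerate for $\rho$ in the compact range (a direct calculation using Poisson generating functions), while their support generates the full lattice $\mathbb{Z}^3$, so Cramér's condition holds. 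A multivariate local CLT with remainder for lattice-valued i.i.d.\ sums (e.g.\ the Edgeworth-style version of Bhattacharya--Rao) then yields
\begin{equation*}
\mathbb{P}\big(\textstyle\sum_{i=1}^m V_i = v\big) = \phi_3\big(v \,\big|\, m\,\mathbb{E} V_1;\, m\Sigma\big) + O(m^{-2}),
\end{equation*}
uniformly in $v \in \mathbb{Z}^3$. Combining this with the univariate local CLT $\mathbb{P}(S = nk) = \Theta(n^{-1/2})$ and dividing produces the conditional p.m.f.\ of $(Z_1, Z_2)$ given $S = nk$ as a bivariate Gaussian density with Schur-complement covariance $\mathbb{Q} = \Sigma_{12,12} - \Sigma_{12,3}\Sigma_{3,3}^{-1}\Sigma_{3,12}$, plus a pointwise error. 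Summing this over any half-plane $\{\vec u \cdot \vec z \leq x\}$---there are $O(n)$ significantly contributing lattice points, each adding $O(n^{-1})$ to the mass and $O(n^{-3/2})$ to the error---then delivers the uniform Kolmogorov bound $O(n^{-1/2})$ claimed in the lemma.

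The delicate step is the last one. Verifying Cramér's lattice condition for $V_i$, checking that the Schur-complement covariance $\mathbb{Q}$ remains uniformly non-degenerate as $\rho$ varies over $[\epsilon, 1/\epsilon]$, and making the Berry--Esseen estimate genuinely uniform in $\vec u$ (not just pointwise in the direction) are all routine within the Edgeworth framework, but require careful bookkeeping since the conditioning on $S = nk$ consumes one full $n^{-1/2}$ factor from the relative accuracy of the multivariate LCLT. The mean and concentration bounds, by contrast, are a straightforward Taylor expansion and a single application of McDiarmid, respectively.
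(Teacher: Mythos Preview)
The paper does not prove this lemma: it is quoted verbatim from \cite[Lemma~4.4]{DM08}, so there is no in-paper argument to compare your proposal against. That said, your plan is essentially the standard route to such a statement and is correct. The mean and tail bounds go through exactly as you describe (multinomial closed forms plus Taylor expansion; bounded differences on the $nk$ i.i.d.\ half-edge placements). For the Berry--Esseen part, Poissonizing the degree sequence, applying a multivariate lattice local CLT with Edgeworth-type remainder to the triple $(Z_1,Z_2,S)$, and then conditioning on $S=nk$ is the canonical technique; the Schur-complement covariance you extract is precisely the $\mathbb{Q}(0)$ recorded in \eqref{eq:2.10}.

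One terminological slip worth fixing: Cram\'er's condition ($\limsup_{\|t\|\to\infty}|\widehat\mu(t)|<1$) \emph{fails} for every lattice distribution, so it is not what you want to invoke. What you need is the minimal-lattice hypothesis for the local CLT, i.e.\ that the support of $V_1$ generates $\mathbb{Z}^3$ as a group. This does hold, since the values $(0,0,0),(1,0,1),(0,1,2),(0,1,3)$ already yield $(0,0,1)$, then $(0,1,0)$, then $(1,0,0)$ by successive differences. With that in place the Bhattacharya--Rao lattice expansion applies, and your error accounting---$O(n)$ significantly contributing lattice points in the half-plane, each carrying conditional pointwise error $O(n^{-3/2})$ after dividing by $\mathbb{P}(S=nk)\asymp n^{-1/2}$---delivers the uniform $O(n^{-1/2})$ Kolmogorov bound. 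The uniformity in $\overrightarrow{u}$ is automatic here because the local CLT bound is pointwise over the whole lattice, not direction-dependent.
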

One immediate consequence of Lemma~\ref{lem:normal_approx} is that 
$n^{-1}\overrightarrow{z}(0)$ converges in probability to $\overrightarrow{y}(0)$. A natural question then is if we can say something similar about $n^{-1}\overrightarrow{z}(n\theta)$. 
If the convergence still holds, one and perhaps the only reasonable candidate for the limit would be $\lim\limits_n n^{-1}\mathbb{E}\overrightarrow{z}(n\theta) = 
\overrightarrow{y}(\theta)$ (say). Since the transition kernel in \eqref{eq:2.1} depends on $\overrightarrow{z}$ and $\tau$ only through the scaled variables $\overrightarrow{x}$ and $\theta$, the gradient of $\overrightarrow{y}(\theta)$ should roughly equal $n^{-1}\mathbb{E}(\Delta \overrightarrow{z}(n\theta))/ n^{-1} = \mathbb{E}(\Delta \overrightarrow{z}(n\theta))$ where the expectations are with respect to 
the same kernel. We further get from \eqref{eq:2.1} that this expectation is $(-1 + (k-1)(\mathfrak{p}_1 - \mathfrak{p}_0),-(k-1)\mathfrak{p}_1)$ which we denote by 
$\overrightarrow{F}\big(\overrightarrow{y}(\theta),\theta \big)$. Thus the process $\{\overrightarrow{z}(n\theta)/n\}_{0 \leq \theta < 1}$ can be hoped to concentrate around the solution of the ODE,
\begin{equation}
\label{eq:ODE}
\frac{d\overrightarrow{y}}{d\theta}(\theta) = \overrightarrow{F}\big(\overrightarrow{y}(\theta),\theta \big),
\end{equation}
with the initial condition $\overrightarrow{y}(\theta) = \overrightarrow{y}$ from 
\eqref{eq:intial_cond_vec}. We denote the solution to \eqref{eq:ODE} subject to the initial conditions \eqref{eq:intial_cond_vec} as $\overrightarrow{y}(\theta,\rho)$ 
although the dependence on $\rho$ will often be suppressed. 
We discuss the analytical properties of $\overrightarrow{y}(\theta,\rho)$ in section~\ref{sec:ODE}, but for the time being let us 
precisely formulate the concentration of $\overrightarrow{z}$ around it. First we present a 
similar result for $\widehat \P_{n,\rho}$.
\begin{lemma}\cite[Lemma 5.2]{DM08}
\label{lem:concentration}
For any $k\geq3$ and $\epsilon > 0$ there exist positive 
$\eta \leq \epsilon$, and $C_0,C_1,C_2,C_3$, such that, for any $n$, $\rho \in [\epsilon, 1/\epsilon]$ and $\tau \in \{0, \ldots, \lfloor n(1-\epsilon)\rfloor\}$,
\begin{enumerate}[(a)]
\item $\overrightarrow{z}(\tau)$ is exponentially concentrated around its mean
\begin{equation*}
\label{eq:concentration1}
\widehat{\mathbb{P}}_{n,\rho}(\parallel\overrightarrow{z}(\tau) - \mathbb{E}\overrightarrow{z}(\tau)\parallel\geq r) \leq 4e^{-r^2/C_0n}.
\end{equation*}
\item $\overrightarrow{z}(\tau)$ is close to the solution of the ODE \eqref{eq:ODE},
\begin{equation*}
\label{eq:concentration2}
\mathbb{E}\parallel\overrightarrow{z}(\tau) - n\overrightarrow{y}(\tau/n)\parallel \leq C_1\sqrt{n\log{n}}.
\end{equation*}
\item$(\overrightarrow{z}(\tau),\tau) \in \mathcal{Q}(\eta)$ with high probability; more precisely,
\begin{equation*}
\label{eq:concentration3}
\widehat{\mathbb{P}}_{n,\rho}((\overrightarrow{z}(\tau),\tau) \notin \mathcal{Q}(\eta)) \leq C_2e^{-C_3n}.
\end{equation*}
\end{enumerate}
\end{lemma}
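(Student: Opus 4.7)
I would prove the three parts in order, with (c) being essentially a corollary of (a) and (b); all three instantiate the Wormald differential-equation-method template adapted to the implicitly defined drift $\overrightarrow F$.

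For part (a), I would use the Doob decomposition
\[
\overrightarrow{z}(\tau) - \E\overrightarrow{z}(\tau) = \big(\overrightarrow{z}(0)-\E\overrightarrow{z}(0)\big) + \sum_{s=0}^{\tau-1}\big(\Delta\overrightarrow{z}(s) - \E[\Delta\overrightarrow{z}(s)\mid\mathcal F_s]\big).
\]
Since the jump $\Delta\overrightarrow{z} = (q_1-q_0,-q_1)$ is deterministically bounded in norm by $k$ (from $q_0+q_1+q_2=k$), the martingale piece has sub-Gaussian increments of size $O(k)$, so Azuma--Hoeffding applied coordinate-wise produces a sub-Gaussian tail of variance $O(k^2\tau) = O(k^2 n)$. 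The initial piece is controlled by Lemma~\ref{lem:normal_approx}; absorbing the constants yields the advertised $4e^{-r^2/C_0 n}$ bound.

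For part (b), set $\widetilde\Delta_\tau := \overrightarrow{z}(\tau) - n\overrightarrow{y}(\tau/n)$. Using the ODE $d\overrightarrow y/d\theta = \overrightarrow F(\overrightarrow y,\theta)$ and Taylor expanding the ODE increment to first order, the one-step recursion reads
\[
\widetilde\Delta_{\tau+1} = \widetilde\Delta_\tau + \big(\overrightarrow F(\overrightarrow{z}(\tau)/n,\tau/n)-\overrightarrow F(\overrightarrow{y}(\tau/n),\tau/n)\big) + M_\tau + O(1/n),
\]
with $M_\tau$ a bounded-increment martingale difference. If $\overrightarrow F$ is Lipschitz in $\overrightarrow x$ with some constant $L = L(\ep,k)$ on $\mathcal Q(\eta)$, then taking norms and expectations gives
\[
\E\|\widetilde\Delta_{\tau+1}\| \le (1+L/n)\,\E\|\widetilde\Delta_\tau\| + \E\|M_\tau\| + O(1/n).
\]
Doob's maximal inequality applied to $\sum M_s$ gives $\E\max_{s\le\tau}\|\sum_{u<s} M_u\| \le C\sqrt{n\log n}$, and a discrete Gronwall across $\tau \le n(1-\ep)$ turns the recursion into $\E\|\widetilde\Delta_\tau\|\le C_1\sqrt{n\log n}$. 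I would run this argument only up to the stopping time at which the process might leave $\mathcal Q(\eta)$ and then remove that restriction via (c).

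Part (c) is then a soft consequence. The ODE analysis of section~\ref{sec:ODE} should produce $\eta \le \ep$ such that $\overrightarrow{y}(\theta,\rho)$ stays at distance at least $2\eta$ from $\partial\mathcal K_\theta$ for all $\theta \le 1-\ep$ and $\rho\in[\ep,1/\ep]$. If $(\overrightarrow z(\tau),\tau)\notin \mathcal Q(\eta)$ then $\|\overrightarrow{z}(\tau)-n\overrightarrow y(\tau/n)\|\ge \eta n$, and by (a) combined with the $\sqrt{n\log n}$ mean displacement from (b) this event has probability at most $C_2 e^{-C_3 n}$.

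\textbf{Main obstacle.} The genuinely non-routine step is verifying that $\overrightarrow F(\overrightarrow x,\theta)$ is Lipschitz and smooth on $\mathcal Q(\eta)$ with constants depending only on $\ep$ and $k$. Because $\lambda$ is defined implicitly by the transcendental equation \eqref{eq:2.3} and diverges as $x_2 \to 0$ or $\theta \to 1$, one has to invert $f_1$ (which is monotone and smooth on $(0,\infty)$) via the implicit function theorem and bound $\partial\lambda/\partial x_1$, $\partial\lambda/\partial x_2$, $\partial\lambda/\partial\theta$ uniformly on $\mathcal Q(\eta)$; propagating these through \eqref{eq:2.2} then yields the Lipschitz constant $L(\ep,k)$ used above. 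Everything else (Azuma, Doob's maximal inequality, discrete Gronwall) is standard, so it is the quantitative control on the implicit function $\lambda$ that carries the content of the lemma.
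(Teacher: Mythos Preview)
The paper does not prove this lemma; it is quoted verbatim from \cite[Lemma~5.2]{DM08}, so there is no in-paper argument to compare against. Your overall template (Azuma for concentration, Lipschitz drift plus discrete Gronwall for the ODE approximation, then (c) from (a)+(b) and the fact that the ODE stays in the interior) is the standard one and is essentially what Dembo--Montanari do, and your identification of the Lipschitz/smoothness control on the implicit $\lambda$ as the analytic core is correct --- that is exactly the content of Lemma~\ref{lem:4.1}.

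There is, however, a genuine error in your argument for part~(a). The identity you wrote,
\[
\overrightarrow{z}(\tau) - \E\overrightarrow{z}(\tau) \;=\; \big(\overrightarrow{z}(0)-\E\overrightarrow{z}(0)\big) + \sum_{s=0}^{\tau-1}\big(\Delta\overrightarrow{z}(s) - \E[\Delta\overrightarrow{z}(s)\mid\mathcal F_s]\big),
\]
is false: the compensator $\sum_s \E[\Delta\overrightarrow{z}(s)\mid\mathcal F_s] = \sum_s \overrightarrow F(\overrightarrow z(s)/n,s/n)$ is \emph{random} (it depends on the trajectory through the state-dependent drift) and is not equal to $\E\overrightarrow z(\tau)-\E\overrightarrow z(0)$. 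Azuma applied to the increment martingale therefore yields concentration around this random compensator, not around the deterministic mean. The repair is either to run the Doob martingale $M_s=\E[\overrightarrow z(\tau)\mid\mathcal F_s]$ and bound $|M_{s+1}-M_s|$ via a synchronous coupling (a one-step perturbation of size $O(k)$ propagates to at most $O(k\,e^{L})$ by time $\tau$, precisely because $\overrightarrow F$ is Lipschitz with constant $L$ on $\widehat q(\epsilon)$), or to reverse the order and first prove high-probability closeness to $n\overrightarrow y(\tau/n)$, from which concentration around the mean follows. With that fix the rest of your plan goes through.
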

Part (c) and Lemma~\ref{lem:coupling} should give us 
analogous results for $\P_{n, \rho}$. This is confirmed by the following two corollaries.
\begin{corollary}[Corollary 5.4., \cite{DM08}]
\label{cor:5.4}
For any $\epsilon > 0$, there exists $0 < \eta < \epsilon$ and positive, finite constants $C_4, C_5$ such that if $\rho \in [\epsilon, 1/\epsilon]$, then
\begin{equation*}
\mathbb{P}_{n,\rho}((\overrightarrow{z}(\tau),\tau) \in \mathcal{Q}(\eta)\mbox{ }\forall\mbox{ }0 \leq \tau \leq n(1 - \epsilon)) \geq 1 - C_4e^{-C_5n}.
\end{equation*}
\end{corollary}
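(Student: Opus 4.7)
\emph{Proof plan.} The idea is to transfer Lemma~\ref{lem:concentration}(c) from $\widehat{\mathbb{P}}_{n,\rho}$ to $\mathbb{P}_{n,\rho}$ via the coupling of Lemma~\ref{lem:coupling}, using a first-exit-time argument to avoid circularity. Concretely, I would introduce three thresholds $\epsilon' < \eta < \eta_0 \leq \epsilon$ — taking $\eta_0$ to be the constant supplied by Lemma~\ref{lem:concentration} applied with the given $\epsilon$, and then $\eta := \eta_0/3$, $\epsilon' := \eta/2$ — and set the coupling slack at the linear scale $r := n\eta/3$.

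Step 1. Apply Lemma~\ref{lem:concentration}(c) pointwise at scale $\eta_0$ to the auxiliary process $\{\overrightarrow{z}'(\tau)\} \sim \widehat{\mathbb{P}}_{n,\rho}$, then union-bound over the at most $n+1$ values of $\tau$: off an event of probability $\leq (n+1)\,C_2\,e^{-C_3 n}$, one has $(\overrightarrow{z}'(\tau),\tau) \in \mathcal{Q}(\eta_0)$ simultaneously for every $\tau \in \{0,\ldots,\lfloor n(1-\epsilon)\rfloor\}$.

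Step 2. Invoke Lemma~\ref{lem:coupling} with parameter $\epsilon'$ in place of $\epsilon$ (permissible since $\rho \in [\epsilon, 1/\epsilon] \subset [\epsilon', 1/\epsilon']$) and with $r = n\eta/3$. Off another event of probability $\leq C_* e^{-\lambda_* n\eta/3}$, the coupling satisfies $\|\overrightarrow{z}(\tau) - \overrightarrow{z}'(\tau)\| \leq n\eta/3$ for all $\tau \leq \tau_*^{(\epsilon')}$, where $\tau_*^{(\epsilon')}$ is the first exit of $\overrightarrow{z}$ from $\mathcal{Q}(\epsilon')$.

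Step 3. On the intersection of the two good events, I would argue by contradiction that $\tau_*^{(\epsilon')} > n(1-\epsilon)$. If instead $\tau_*^{(\epsilon')} \leq n(1-\epsilon)$, the coupling still controls the processes at $\tau_*^{(\epsilon')}$, so $\overrightarrow{z}(\tau_*^{(\epsilon')})$ lies within $n\eta/3$ of $\overrightarrow{z}'(\tau_*^{(\epsilon')}) \in \mathcal{Q}(3\eta)$. A routine slack inspection of the four defining inequalities of $\mathcal{Q}(\cdot)$ — using $|\max(z_1,0) - \max(z_1',0)| \leq |z_1 - z_1'|$ so that a perturbation of size $n\eta/3$ tightens each slack by at most $n\eta$ — then places $\overrightarrow{z}(\tau_*^{(\epsilon')})$ inside $\mathcal{Q}(2\eta) \subset \mathcal{Q}(\epsilon')$, contradicting the definition of $\tau_*^{(\epsilon')}$. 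Hence the coupling is in force throughout $\tau \leq n(1-\epsilon)$, and the same slack calculation gives $\overrightarrow{z}(\tau) \in \mathcal{Q}(2\eta) \subset \mathcal{Q}(\eta)$ on that range.

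The main obstacle — and the reason for the three-tier thresholding $\epsilon' < \eta < \eta_0$ — is the circular structure of Lemma~\ref{lem:coupling}: the coupling is only valid up to the first exit of $\overrightarrow{z}$ from $\mathcal{Q}(\epsilon')$, which is precisely the kind of event one wishes to rule out. Choosing $\epsilon'$ strictly smaller than $\eta$ breaks the circularity, because the comparison with $\overrightarrow{z}'$ forces $\overrightarrow{z}$ to lie in the strictly smaller $\mathcal{Q}(2\eta)$, which never approaches the boundary of $\mathcal{Q}(\epsilon')$. The two exponentially small bounds combine to give the desired conclusion with $C_4, C_5$ depending only on $k$ and $\epsilon$.
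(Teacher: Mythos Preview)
Your proof is correct. The three-tier thresholding and the first-exit-time contradiction argument are exactly the right way to break the apparent circularity in Lemma~\ref{lem:coupling}, and your slack computation on the four constraints defining $\mathcal{Q}(\cdot)$ is accurate (in particular the use of $|\max(z_1,0)-\max(z_1',0)|\leq |z_1-z_1'|$ for the fourth constraint, and the observation that the time constraint cannot be the one violated since $n(1-\epsilon)<n(1-\epsilon')$).

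The paper does not supply its own proof of this statement: it is imported verbatim as Corollary~5.4 of \cite{DM08}. Your argument is precisely the route taken in \cite{DM08} to derive that corollary from their Lemma~5.1 and Lemma~5.2(c), so there is no genuine methodological difference to report.
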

\begin{corollary}
\label{cor:5.5}
For any $\epsilon > 0$, there exist finite, positive $A = A(k,\epsilon)$, $C = C(k,\epsilon)$, and a coupling between $\{\overrightarrow{z}(\tau)\}\myeq\mathbb{P}_{n,\rho}(.)$ and $\{\overrightarrow{z}'(\tau)\}\myeq\widehat{\mathbb{P}}_{n,\rho}(.),$ such that for any $n$, $\rho \in [\epsilon,1/\epsilon]$,
\begin{equation*}
\mathbb{P}\big(\sup\limits_{\tau\leq n(1-\epsilon)}||\overrightarrow{z}(\tau) - \overrightarrow{z}'(\tau)||\geq A\log n\big) \leq Cn^{-1}\,.
\end{equation*}
\end{corollary}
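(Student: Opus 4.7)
The plan is to deduce this corollary by combining the coupling in Lemma~\ref{lem:coupling} with the confinement bound in Corollary~\ref{cor:5.4}. Lemma~\ref{lem:coupling} produces a coupling with an exponential tail on $\|\overrightarrow{z}(\tau)-\overrightarrow{z}'(\tau)\|$, but only up to the first exit time $\tau_*$ of $(\overrightarrow{z}(\tau),\tau)$ from $\mathcal{Q}(\epsilon)$; Corollary~\ref{cor:5.4} tells us that, measured at a slightly smaller scale $\eta<\epsilon$, this exit time exceeds $n(1-\epsilon)$ with overwhelming probability. Taking $r=A\log n$ in Lemma~\ref{lem:coupling} converts its exponential tail into a polynomial $n^{-\lambda_* A}$ tail, after which a union bound closes the argument.

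More precisely, first apply Corollary~\ref{cor:5.4} to the given $\epsilon$ to obtain $\eta=\eta(k,\epsilon)<\epsilon$ and positive constants $C_4,C_5$ such that, for all $\rho\in[\epsilon,1/\epsilon]$,
\begin{equation*}
\mathbb{P}_{n,\rho}\big(\tau_*^\eta \leq n(1-\epsilon)\big) \leq C_4 e^{-C_5 n},
\end{equation*}
where $\tau_*^\eta$ denotes the first $\tau$ with $(\overrightarrow{z}(\tau),\tau)\notin\mathcal{Q}(\eta)$. Next, apply Lemma~\ref{lem:coupling} with $\eta$ in place of $\epsilon$; since $[\epsilon,1/\epsilon]\subset[\eta,1/\eta]$, the hypothesis on $\rho$ is preserved, and one obtains a coupling with constants $C_*=C_*(k,\eta)$, $\lambda_*=\lambda_*(k,\eta)$ (depending only on $k,\epsilon$) such that
\begin{equation*}
\mathbb{P}\big(\sup_{\tau\leq \tau_*^\eta}\|\overrightarrow{z}(\tau)-\overrightarrow{z}'(\tau)\|>A\log n\big)\leq C_* e^{-\lambda_* A\log n}=C_* n^{-\lambda_* A}.
\end{equation*}
Choose $A=A(k,\epsilon)=2/\lambda_*$ so that the right-hand side is $C_* n^{-2}\leq C_* n^{-1}$. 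On the intersection of $\{\tau_*^\eta>n(1-\epsilon)\}$ with $\{\sup_{\tau\leq \tau_*^\eta}\|\overrightarrow{z}(\tau)-\overrightarrow{z}'(\tau)\|\leq A\log n\}$ we have $\sup_{\tau\leq n(1-\epsilon)}\|\overrightarrow{z}(\tau)-\overrightarrow{z}'(\tau)\|\leq A\log n$, and a union bound yields total failure probability at most $C_4 e^{-C_5 n}+C_* n^{-2}\leq C n^{-1}$ for $C=C(k,\epsilon)$ large enough.

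The argument is largely bookkeeping and presents no substantive obstacle; the one small point to get right is to invoke Corollary~\ref{cor:5.4} at a scale $\eta$ strictly smaller than $\epsilon$, so that Lemma~\ref{lem:coupling} can be legitimately applied with parameter $\eta$ and the resulting coupling is valid throughout $[0,n(1-\epsilon)]$ with only an $e^{-C_5 n}$ cost. The $n^{-1}$ decay in the conclusion is dictated entirely by the polynomial tail produced by the choice $r=A\log n$, and any further improvement would require sharpening Lemma~\ref{lem:coupling} itself.
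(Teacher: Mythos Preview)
Your argument is correct and is precisely the intended one: combine the exponential tail from Lemma~\ref{lem:coupling} (applied at scale $\eta$) with the confinement bound of Corollary~\ref{cor:5.4}, and choose $r=A\log n$ with $A$ large enough. The paper's one-line proof cites ``the previous corollary and Lemma~\ref{lem:concentration}'', but the coupling can only come from Lemma~\ref{lem:coupling}; the reference to Lemma~\ref{lem:concentration} is almost certainly a slip (consistent with the sentence just before the two corollaries, which attributes both to ``Part (c) and Lemma~\ref{lem:coupling}'').
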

\begin{proof}
This follows immediately from the previous corollary and Lemma~\ref{lem:concentration}.
\end{proof}
In section~\ref{sec:ODE} we show that there exists $\theta_k \in (0, 1)$ such that (1) $\theta_k = \min_{\theta \in [0, 1]}\{\theta \in [0, 1]: y_1(\theta, \rho_k) = 0\}$ and (2) $y_2(\theta_k, \rho_k) = 1 - \theta_k$. Furthermore in a small neighborhood of $\theta_k$, $$y_1(\theta, \rho_k) \approx \frac{\partial y_1}{\partial \theta}(\theta_k,\rho_k)(\theta - \theta_k)\,, \mbox{ and }y_2(\theta)\approx 1 - \theta_k + \frac{\partial y_2}{\partial \theta}(\theta_k,\rho_k)(\theta - \theta_k)\,,$$
where both of these partial derivatives are negative. 
Fluctuations of $\overrightarrow{z}(n\theta_k)$ around $\overrightarrow{y}(n\theta_k)$ are gained in $n\theta_k$ stochastic steps, and are therefore should be of order $\sqrt{n}$. We show in section~\ref{sec:approximation} that the rescaled variable $(\overrightarrow{z}(n\theta_k) - \overrightarrow{y}(n\theta_k))/\sqrt{n}$ converges to a Gaussian random vector. Its covariance matrix $\mathbb{Q}(\theta, \rho) = \{\mathbb{Q}_{ab}(\theta, \rho); 1 \leq a, b \leq 2\}$ is the symmetric positive definite solution of the ODE:
\begin{equation}
\label{eq:2.8}
\frac{d\mathbb{Q}(\theta)}{d\theta}=\mathbb{G}(\overrightarrow{y}(\theta),\theta) + \mathbb{A}(\overrightarrow{y}(\theta),\theta)\mathbb{Q}(\theta) + \mathbb{Q}(\theta)\mathbb{A}(\overrightarrow{y}(\theta),\theta)^T,
\end{equation}
where $\mathbb{A}(\overrightarrow{x},\theta) = \{A_{ab}(\overrightarrow{x},\theta);1\leq a,b\leq2\}$ for $A_{ab}(\overrightarrow{x},\theta) = \partial_{x_b}F_a(\overrightarrow{x},\theta)$, and $\mathbb{G}(\overrightarrow{x},\theta)$ is the covariance of $\overrightarrow{z}(\tau+1) - \overrightarrow{z}(\tau)$ w.r.t the transition kernel \eqref{eq:2.1}, i.e., the nonnegative definite symmetric matrix with entries
\begin{equation}
\label{eq:2.9}
\begin{cases}
\mathbb G_{11}(\overrightarrow{x},\theta) = (k-1)[\mathfrak{p_0} + \mathfrak{p_1} - (\mathfrak{p}_0-\mathfrak{p_1})^2],\\
\mathbb G_{12}(\overrightarrow{x},\theta) = -(k-1)[\mathfrak{p_0}\mathfrak{p_1} + \mathfrak{p_1}(1-\mathfrak{p_1})],\\
\mathbb G_{22}(\overrightarrow{x},\theta) = (k-1)\mathfrak{p_1}(1-\mathfrak{p_1}).
\end{cases}
\end{equation}
Like before we will omit $\rho$ from the notation when 
its values is fixed in a context. The positive definite initial condition $Q(0)$ for \eqref{eq:2.8} is computed on the original graph ensemble, and is given by
\begin{equation}
\label{eq:2.10}
\begin{cases}
\mathbb Q_{11}(0) = \frac{k}{\gamma}\gamma e^{-2\gamma}(e^{\gamma} -1 + \gamma - \gamma^2),\\
\mathbb Q_{12}(0) = -\frac{k}{\gamma}\gamma e^{-2\gamma}(e^{\gamma} -1 - \gamma^2),\\
\mathbb Q_{22}(0) = \frac{k}{\gamma}e^{-2\gamma}[(e^{\gamma} -1) + \gamma(e^{\gamma} -2) - \gamma^2(1+\gamma)],
\end{cases}
\end{equation}
where $\gamma = \frac{k}{\rho}$. We can now state the following proposition whose proof will be the main focus of this paper.
\begin{proposition}
\label{prop:prep}
Let $\overrightarrow{\xi}(r)$ is a bivariate Gaussian random vector of mean $\frac{\partial\overrightarrow{y}}{\partial\rho}r$ and variance $\mathbb{Q}$ (both evaluated at $\theta = 
\theta_k$ and $\rho = \rho_k$). Denote by $n_{core},m_{core}$ the number of equations and variables in the core of $\mathcal E_{k, m, n}$ and by 
$\mathbb{P}_{core}(.)$ their joint law. Then there exists a positive constant $\eta$ such that for all $A > 0$, $r \in \R$, and $n$ large enough, if $\rho_n = \rho_k + rn^{-1/2}$, we have
\begin{equation}
\label{eq:prepeq1}
|\mathbb{P}_{core}(m_{core} - n_{core}\geq A\log n) - \mathbb{P}(\xi_1(r) + \xi_2(r) \geq 0)| \leq n^{-\eta}\,,
\end{equation}
and
\begin{equation}
\label{eq:prepeq2}
|\mathbb{P}_{core}(m_{core} - n_{core}\geq -A\log n) - \mathbb{P}(\xi_1(r) + \xi_2(r) \leq 0)| \leq n^{-\eta}\,.
\end{equation}
\end{proposition}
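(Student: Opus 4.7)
The plan is to reduce the proposition to a quantitative CLT for the vector $\overrightarrow{z}(\tau_k)$ at the deterministic time $\tau_k := \lfloor n\theta_k\rfloor$, together with an analysis of the short remaining trajectory up to the random stopping time $\tau^*$ (the first $\tau$ with $z_1(\tau)=0$).

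\textbf{CLT at time $\tau_k$.} Using Corollary~\ref{cor:5.5} I replace $\{\overrightarrow{z}(\tau)\}_{\tau\le n(1-\epsilon)}$ by the chain $\{\overrightarrow{z}'(\tau)\}\sim\widehat{\mathbb{P}}_{n,\rho_n}(\cdot)$ at a cost of $O(\log n)$ in sup-norm with probability $1-O(n^{-1})$, which is negligible on the $\sqrt{n}$ scale. The chain $\overrightarrow{z}'$ has smooth drift $\overrightarrow{F}$ and covariance $\mathbb{G}$ given in \eqref{eq:2.9}. Writing the centered process as a telescoping sum of conditional-mean-zero martingale increments and approximating the quadratic variation by its ODE value (via Lemma~\ref{lem:concentration}), a Berry-Esseen type quantitative martingale CLT yields, for some $\eta>0$,
\[
\sup_{\overrightarrow{u}\in\mathbb{R}^2,\,x\in\mathbb{R}}\bigl|\widehat{\mathbb{P}}_{n,\rho_n}(\overrightarrow{u}\cdot(\overrightarrow{z}'(\tau_k)-n\overrightarrow{y}(\theta_k,\rho_n))\le x\sqrt{n})-\Phi(x/\sqrt{\overrightarrow{u}^{T}\mathbb{Q}\overrightarrow{u}})\bigr|\le n^{-\eta},
\]
where $\mathbb{Q}=\mathbb{Q}(\theta_k,\rho_k)$ solves \eqref{eq:2.8}. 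Combined with the first-order expansion $n\overrightarrow{y}(\theta_k,\rho_n)=n\overrightarrow{y}(\theta_k,\rho_k)+r\sqrt{n}\,\partial_\rho\overrightarrow{y}(\theta_k,\rho_k)+O(1)$ (using smooth $\rho$-dependence of the ODE, to be established in Section~\ref{sec:ODE}), this gives convergence of $n^{-1/2}(\overrightarrow{z}(\tau_k)-n\overrightarrow{y}(\theta_k,\rho_k))$ to $\overrightarrow{\xi}(r)$ in Kolmogorov distance at rate $n^{-\eta}$.

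\textbf{From $\overrightarrow{z}(\tau_k)$ to $m_{core}-n_{core}$.} The crucial algebraic observation is that $y_1(\theta_k,\rho_k)=0$ implies $\mathfrak{p}_0=0$ at $(\theta_k,\rho_k)$, and hence from \eqref{eq:ODE} and \eqref{eq:2.2},
\[
\frac{\partial y_1}{\partial\theta}(\theta_k,\rho_k)+\frac{\partial y_2}{\partial\theta}(\theta_k,\rho_k)=-1.
\]
Since $\partial_\theta y_1(\theta_k,\rho_k)<0$, a short diffusion-approximation argument for the first passage of $z_1$ through zero gives $\tau^*-\tau_k=-z_1(\tau_k)/\partial_\theta y_1(\theta_k,\rho_k)+\widetilde{O}(n^{1/4})$ and $z_2(\tau^*)-z_2(\tau_k)=(\tau^*-\tau_k)\partial_\theta y_2(\theta_k,\rho_k)+\widetilde{O}(n^{1/4})$ with probability $1-o(n^{-\eta})$. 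Substituting, using $\partial_\theta y_2+1=-\partial_\theta y_1$, and noting that $n_{core}=n-\tau^*$ and $m_{core}=z_2(\tau^*)$,
\[
m_{core}-n_{core}=z_1(\tau_k)+z_2(\tau_k)-n(1-\theta_k)+\widetilde{O}(n^{1/4})=\sqrt{n}\bigl(\xi_1(r)+\xi_2(r)\bigr)+o(\sqrt{n}),
\]
where the coefficient $1$ of $z_1(\tau_k)$ comes precisely from the identity $|\partial_\theta y_1|+|\partial_\theta y_2|=1$. Finally, since $\xi_1(r)+\xi_2(r)$ is a one-dimensional Gaussian with positive variance (as $\mathbb{Q}$ is positive definite), its distribution function is Lipschitz, so replacing the threshold $\pm A\log n$ by $0$ costs only $O(\log n/\sqrt{n})$; combined with the CLT this yields both \eqref{eq:prepeq1} and \eqref{eq:prepeq2}.

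\textbf{Main obstacles.} The main technical hurdle is the quantitative CLT for $\overrightarrow{z}'(\tau_k)$: Lemmas~\ref{lem:coupling}--\ref{lem:concentration} only supply concentration around the ODE, whereas we need a Berry-Esseen estimate uniform over linear functionals with polynomial rate. This requires a step-by-step comparison of martingale increments with a matching Gaussian and a careful accounting of the cumulative error over the $\tau_k=\Theta(n)$ steps, using Lemma~\ref{lem:concentration}(a) to freeze the quadratic variation along typical trajectories. A secondary but delicate point is the continuation analysis, where one must rule out that $z_1$ hits zero well before its predicted hitting time due to fluctuations alone (particularly when $\rho_n<\rho_k$ and the ODE trajectory has already crossed below zero); this follows from Doob-type estimates on the excursions of $z_1$ together with the strict negativity of its drift in a neighbourhood of $\tau^*$.
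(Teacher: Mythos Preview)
Your overall strategy is sound and exploits the same key algebraic identity as the paper, namely that $\mathfrak{p}_0(\overrightarrow{y}(\theta_k,\rho_k),\theta_k)=0$ forces $F_1+F_2=-1$ at the critical point, which is exactly what collapses $m_{core}-n_{core}$ to the linear combination $\xi_1+\xi_2$. However, the technical route you take differs from the paper's in two substantial ways.

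First, for the CLT step you propose a direct quantitative martingale Berry--Esseen bound for $\overrightarrow{z}'(\tau_k)$. The paper instead passes through the \emph{linearized} chain \eqref{eq:2.12}, which by \eqref{eq:2.14} is an explicit linear functional of \emph{independent} increments. This allows the paper to invoke Sakhanenko--Zaitsev strong approximation (Lemma~\ref{lem:6.1}) and obtain a pathwise coupling to a Gaussian process $\{\overrightarrow{z}''(\tau)\}$ rather than a Berry--Esseen bound on distribution functions. Your martingale-CLT route is viable but, as you acknowledge, obtaining a polynomial rate uniform over linear functionals for a time-inhomogeneous chain over $\Theta(n)$ steps is genuinely delicate; the paper's linearization sidesteps this entirely.

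Second, for the passage from $\tau_k$ to the hitting time $\tau^*$ you use a diffusion-type first-passage approximation with $\tilde O(n^{1/4})$ errors. The paper instead \emph{engineers} the Gaussian surrogate on the window $J_n$ so that $\Delta'_{\tau,1}+\Delta'_{\tau,2}$ is deterministic for $\tau\ge\tau_n$; this makes $z_1''(\tau)+z_2''(\tau)+\tau$ constant on $J_n$, and one obtains the \emph{exact} identity \eqref{eq:conv_expr}:
\[
z_2''(\tau_c^G)-(n-\tau_c^G)=\xi_1^*+\xi_2^*,
\]
with no approximation error from the continuation step. The randomness of the hitting time $\tau_c^G$ simply drops out. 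Your approximate first-passage analysis would work, but the paper's construction is cleaner and avoids having to control the excursions of $z_1$ near its zero.

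In short: both approaches hinge on the same identity, but the paper trades your (harder) martingale CLT and first-passage estimates for a linearization-plus-strong-approximation pipeline in which the first-passage computation becomes algebraically trivial. Your proposal would go through with more work; the paper's path is more direct.
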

The key to Proposition~\ref{prop:prep} is the construction of another Markov chain as in \cite[equation 2.12]{DM08} which, within the critical time window $J_n \equiv [n\theta_k - n^{\beta},n\theta_k - n^{\beta}]$ (for some $\beta \in (\frac{1}{2},1)$), serves as a good approximation for the chain with transition kernel 
\eqref{eq:2.1}. This Markov chain is evolved as:
\begin{equation}
\label{eq:2.12}
\overrightarrow{z}'(\tau + 1) = \overrightarrow{z}'(\tau) + \widetilde{\mathbb{A}}_{\tau}\big(n^{-1}\overrightarrow{z}'(\tau) - \overrightarrow{y}(\tau/n)\big) + \Delta_{\tau}\,,
\end{equation}
where $\tau_n\equiv\lfloor n\theta_k - n^{\beta}\rfloor$, $\widetilde{\mathbb{A}}_{\tau} \equiv \mathbb{I}_{\tau < \tau_n}\mathbb{A}(\overrightarrow{y}(\tau/n,\rho),\tau/n)$ and $\Delta_{\tau}$'s are independent random variables with mean $\overrightarrow{F}(\overrightarrow{y}(\tau/n,\rho),\tau/n)$ and covariance $\mathbb{G}(\overrightarrow{y}(\tau/n,\rho),\tau/n)$. 
Denote by $\overline{\mathbb{P}}_{n,\rho}(.)$ the law of the $\mathbb{R}^2$-valued Markov chain $\{\overrightarrow{z}'(\tau)\}$, where $\overrightarrow{z}'(0)$ has the uniform distribution on the graph ensemble $\mathcal{G}_l(n,m)$ for $m \equiv \lfloor n\rho \rfloor$, and
\begin{eqnarray*}
&&\overline{\mathbb{P}}_{n,\rho}(\overrightarrow{z}'(\tau + 1)=\overrightarrow{z}'(\tau) + \Delta_{\tau} + \widetilde{\mathbb{A}}_{\tau}(n^{-1}\overrightarrow{z}' - \overrightarrow{y}(\tau/n))|\overrightarrow{z}'(\tau) = \overrightarrow{z}')\\
&&=\widehat{W}_{\tau/n}(\Delta_{\tau}|\overrightarrow{y}(\tau/n)).
\end{eqnarray*}
Then we have the following proposition regarding approximation of $\widehat{\mathbb{P}}_{n,\rho}(.)$ by $\overline{\mathbb{P}}_{n,\rho}(.)$ which is a slightly modified version of \cite[Proposition~5.5]{DM08} and is central to proving Proposition~\ref{prop:prep}.
\begin{proposition}
\label{prop:5.5}
Fixing $\beta\in(\frac{1}{2},1)$ and $\beta' < \beta$, for any $\delta > \frac{1}{4}\vee(\beta - \frac{1}{2})$, there exist constants $\alpha, c$ and a coupling of the processes $\{\overrightarrow{z}(.)\}$ of distribution $\widehat{\mathbb{P}}_{n,\rho}$ and $\{\overrightarrow{z}'(.)\}$ of distribution $\overline{\mathbb{P}}_{n,\rho}(.)$ such that for all $n$ and $|\rho - \rho_k|\leq n^{\beta'-1}$,
\begin{equation}
\mathbb{P}\big(\sup\limits_{\tau\in J_n\equiv[n\theta_k - n^\beta, n\theta_k + n^\beta]}||\overrightarrow{z}(\tau) - \overrightarrow{z}'(\tau)||\geq cn^{\delta}\big) \leq \alpha n^{-1}
\end{equation}
\end{proposition}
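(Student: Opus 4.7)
The plan is to construct the coupling step-by-step, following the template of \cite[Proposition~5.5]{DM08} with two modifications: the density parameter $\rho$ varies in a window of size $n^{\beta'-1}$ around $\rho_k$, and the analysis must cover the entire critical window $J_n$ rather than just its left endpoint. At each step $\tau$, I would first sample the increment $\Delta_\tau$ of $\overrightarrow{z}'(\tau)$ from the multinomial kernel $\widehat{W}_{\tau/n}(\cdot\mid\overrightarrow{y}(\tau/n,\rho))$, then sample $\Delta\overrightarrow{z}(\tau)$ from $\widehat{W}_{\tau/n}(\cdot\mid n^{-1}\overrightarrow{z}(\tau))$ via a maximal coupling with $\Delta_\tau$. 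Since $(\mathfrak{p}_0,\mathfrak{p}_1,\mathfrak{p}_2)$ depends smoothly on $\overrightarrow{x}$ away from the boundary $x_2=0$, the total variation between these two multinomials, and hence the per-step coupling failure probability, is bounded by $C\|n^{-1}\overrightarrow{z}(\tau)-\overrightarrow{y}(\tau/n)\|$.

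The key estimate comes from the error recursion for $\overrightarrow{e}(\tau)\equiv\overrightarrow{z}(\tau)-\overrightarrow{z}'(\tau)$. Taylor expanding $\overrightarrow{F}(\cdot,\tau/n)$ around $\overrightarrow{y}(\tau/n)$ yields
\begin{equation*}
\overrightarrow{e}(\tau+1)-\overrightarrow{e}(\tau)=n^{-1}\widetilde{\mathbb{A}}_\tau\overrightarrow{e}(\tau)+\mathbb{I}_{\tau\geq\tau_n}\,\mathbb{A}(\overrightarrow{y}(\tau/n),\tau/n)\bigl(n^{-1}\overrightarrow{z}(\tau)-\overrightarrow{y}(\tau/n)\bigr)+R(\tau)+M(\tau),
\end{equation*}
where $R(\tau)=O(\|n^{-1}\overrightarrow{z}(\tau)-\overrightarrow{y}(\tau/n)\|^2)$ is the quadratic Taylor remainder and $M(\tau)$ is the martingale increment produced by the coupled noises. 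Lemma~\ref{lem:concentration} yields $\|n^{-1}\overrightarrow{z}(\tau)-\overrightarrow{y}(\tau/n)\|=O(\sqrt{(\log n)/n})$ with high probability uniformly in $\tau\leq n(1-\epsilon)$, which controls both $R(\tau)$ and the per-step TV bound. A discrete Gronwall argument over phase~1 ($\tau<\tau_n$) gives a bounded multiplicative factor since $\beta<1$ and $\|\mathbb{A}\|$ remains bounded along the trajectory; a Freedman/Azuma bound on $\sum_{\tau<\tau_n}M(\tau)$ then gives $\|\overrightarrow{e}(\tau_n)\|=O(n^{1/4}(\log n)^{O(1)})$ with high probability.

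Inside $J_n$ (where $\widetilde{\mathbb{A}}_\tau=0$) the divergence has two contributions. The deterministic drift $\sum_{\tau\in J_n}\mathbb{A}(\overrightarrow{y}(\tau/n),\tau/n)\,n^{-1}(\overrightarrow{z}(\tau)-n\overrightarrow{y}(\tau/n))$ is of order $n^{\beta-1/2}(\log n)^{1/2}$ by Lemma~\ref{lem:concentration}(b) applied termwise, while the accumulated coupled-noise martingale has quadratic variation of order $n^\beta\cdot n^{-1/2}$ and hence fluctuations of order $n^{\beta/2-1/4}(\log n)^{O(1)}$ by Freedman. Combining these with $\|\overrightarrow{e}(\tau_n)\|$ gives
\begin{equation*}
\sup_{\tau\in J_n}\|\overrightarrow{e}(\tau)\|=O\bigl((n^{1/4}+n^{\beta-1/2})(\log n)^{O(1)}\bigr),
\end{equation*}
which is dominated by $cn^\delta$ for every $\delta>\tfrac14\vee(\beta-\tfrac12)$. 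The perturbation $|\rho-\rho_k|\leq n^{\beta'-1}$ shifts $\overrightarrow{y}(\cdot,\rho)$ and $\mathbb{A}(\overrightarrow{y}(\cdot,\rho),\cdot)$ by $O(n^{\beta'-1})$ via smooth dependence of the ODE~\eqref{eq:ODE} on $\rho$; these perturbations contribute only lower-order terms provided $\beta'<\beta$.

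The principal obstacle is controlling the Lipschitz regularity of $(\mathfrak{p}_0,\mathfrak{p}_1,\mathfrak{p}_2)$ and of $\mathbb{A}(\overrightarrow{x},\theta)$ uniformly as $\tau/n\to\theta_k$, where $y_1\to 0$; the implicit function $\lambda$ defined by \eqref{eq:2.3} and the $\max(x_1,0)$ truncation in the definition of $\mathfrak{p}_0$ threaten to create non-smoothness at the very boundary the process is approaching. I would handle this by restricting to the exponentially high-probability event $\{(\overrightarrow{z}(\tau),\tau)\in\mathcal{Q}(\eta)\}$ from Corollary~\ref{cor:5.4}, on which $x_2$ is bounded away from $0$ and $\lambda$ stays in a compact subinterval of $(0,\infty)$, so that all relevant derivatives are bounded; the possibility of $x_1<0$ along the stochastic path is absorbed into the quadratic remainder $R(\tau)$ using that $\overrightarrow{y}(\theta,\rho)$ itself remains in the positive orthant throughout $[0,\theta_k]$.
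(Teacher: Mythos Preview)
Your proposal is correct and is essentially a detailed reconstruction of the proof of \cite[Proposition~5.5]{DM08}, which is exactly what the paper does: its own proof is a two-sentence pointer to \cite{DM08}, noting only that the single threshold-specific ingredient (their Corollary~5.3, which guarantees $y_1(\theta,\rho)>0$ on $[0,\theta_n]$ so that $\mathbb{A}$ stays bounded) must be replaced by the paper's Lemma~\ref{cor:5.3} together with the estimate $y_1(\theta,\rho)\geq c_1 n^{\beta-1}$ from the proof of Lemma~\ref{lem:4.3}. You recover this same ingredient implicitly in your final paragraph via the $O(n^{\beta'-1})$ perturbation of the ODE trajectory combined with $\beta'<\beta$.
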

\begin{proof}
Very similar to the proof of 
\cite[Proposition~5.5]{DM08}. All the arguments are valid for general $\rho$ bounded away from 0 and $\infty$ except for \cite[Corollary~5.3]{DM08} which corresponds to a different threshold than ours (denoted as $\rho_c$ in the paper). But this poses no problem as we have an analogue of this corollary (see Lemma~\ref{cor:5.3}) for $\rho_k$.
\end{proof}
The main usefulness of this approximation lies in the following observation. Taking
\begin{equation}
\label{eq:2.13}
\widetilde{\mathbb{B}}_{\sigma}^{\tau} \equiv \bigg(\mathbb{I} + \frac{1}{n}\widetilde{\mathbb{A}}_{\tau}\bigg)\ldots\bigg(\mathbb{I} + \frac{1}{n}\widetilde{\mathbb{A}}_{\sigma}\bigg),
\end{equation}
for integers $0 \leq \sigma \leq \tau$ (while $\widetilde{\mathbb{B}}_{\sigma}^{\tau}\equiv\mathbb{I}$ in case $\tau < \sigma$), we see that
\begin{equation}
\label{eq:2.14}
\overrightarrow{z}'(\tau) = \widetilde{\mathbb{B}}_{0}^{\tau-1}\overrightarrow{z}'(0) + \sum\limits_{\sigma = 0}^{\tau - 1}\widetilde{\mathbb{B}}_{\sigma+1}^{\tau-1}(\Delta_{\sigma} - \widetilde{\mathbb{A}}_{\sigma}\overrightarrow{y}(\tau/n,\rho))
\end{equation}
is a sum of (bounded) independent random variables, hence of approximately Gaussian distribution which prepares the ground for Proposition~\ref{prop:prep}.

We are just steps away from proving 
Theorem~\ref{thm-main}. \cite[Lemma~3.1]{DM08} tells us that conditional on $n_{core}$ and $m_{core}$ i.e. the number of equations and variables in the core, the system is uniformly distributed on all possible instances of $\mathcal E_{k, m_{core}, n_{core}}$ that has no variable 
with degree 1. Such a system of equations was called 
\emph{constrained $k$-XORSAT} in \cite{pittel2016}. Our very last ingredient is the following theorem proved from \cite{pittel2016} (see the discussions in Remark~\ref{remark:1}).
\begin{theorem}
\label{thm:Pit}
Let $Ax = b$ be a uniformly random constrained $k$-XORSAT instance with $n$ equations and $m$ variables, with $k \geq 3$ and $m, n \to \infty$ with $\liminf n/m > 2/k$. Then, for any $w(n)\to +\infty$, if $n + w(n) \leq m$ then $Ax = b$ is almost surely satisfiable, with satisfiability probability $1 - O(n^{-(k-2)} + exp(-c^\star\omega(n)))$, while if $n \geq m + \omega(n)$ then $Ax = b$ is almost surely unsatisfiable, with satisfiability probability $O(2^{-\omega(n)})$. Here $c^\star$ is a positive constant.
\end{theorem}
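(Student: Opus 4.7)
My plan is to prove the two directions of Theorem~\ref{thm:Pit} by quite different arguments.

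For the unsatisfiability direction ($n \geq m + \omega(n)$), the cleanest route is a dimension argument. For any realization of the coefficient matrix $A \in \F_2^{n \times m}$ one has $\mathrm{rank}(A) \leq m$, and because $b$ is uniform on $\F_2^n$ independently of $A$,
\begin{equation*}
\P(\mathrm{SAT} \mid A) \;=\; 2^{\mathrm{rank}(A) - n} \;\leq\; 2^{m-n}.
\end{equation*}
Taking expectation gives $\P(\mathrm{SAT}) \leq 2^{m-n} \leq 2^{-\omega(n)}$, which is even stronger than claimed.

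For the satisfiability direction ($n + \omega(n) \leq m$), the strategy is to show that the rows of $A$ are almost surely linearly independent, which forces $\mathrm{rank}(A) = n \leq m$ and hence satisfiability for every $b$. Let $D$ be the number of nonempty $S \subseteq [n]$ with $\sum_{j \in S} A_{j,\cdot} = 0$. Each such left dependency $y_S$ imposes a single parity constraint $y_S^T b = 0$, violated by a uniform $b$ with probability $1/2$, so $\P(\mathrm{UNSAT} \mid A) \leq D/2$ and hence
\begin{equation*}
\P(\mathrm{UNSAT}) \;\leq\; \tfrac{1}{2}\sum_{s=1}^n \binom{n}{s}\, p_s,
\end{equation*}
where $p_s$ is the probability that the $sk$ uniformly chosen slots of $s$ fixed equations give each variable an even count. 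A standard generating-function identity yields
\begin{equation*}
p_s \;=\; \frac{1}{m^{sk}\, 2^m} \sum_{j=0}^m \binom{m}{j}(m-2j)^{sk}.
\end{equation*}
I would then split the sum over $s$ into three ranges. For bounded $s$ (in particular $s=2$ when $k$ is odd) a direct expansion produces the polynomial error of order $n^{-(k-2)}$, coming from small subsets of equations that happen to cancel. An intermediate regime is handled by crude tail bounds. Finally for $s$ near $n$ one uses $p_s \approx 2^{-m}$ together with $\binom{n}{s} \leq 2^n$ to get a contribution $\leq 2^{n-m} \leq 2^{-\omega(n)}$, which is precisely where the hypothesis $n + \omega(n) \leq m$ enters. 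To pass from the unconstrained configuration model to the constrained one (each variable appearing at least twice), one divides by the probability of the constraint, which is bounded below by a positive constant under $\liminf n/m > 2/k$ by a standard multivariate Poisson approximation on the degree sequence.

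The part I expect to be most delicate is the precise estimate of $p_s$ for $s/n$ in a proper subinterval of $(0,1)$, where neither the small-$s$ Taylor expansion nor the trivial large-$s$ bound is tight. This requires a saddle-point analysis of $\sum_j \binom{m}{j}(1 - 2j/m)^{sk}$ combined with Stirling asymptotics for $\binom{n}{s}$; one must verify that the resulting exponent $n^{-1}\log(\binom{n}{s}p_s)$ is strictly negative on $(0,1)$, uniformly on any compact subinterval, so that the intermediate range contributes only an exponentially small term that is absorbed into $e^{-c^{\star}\omega(n)}$.
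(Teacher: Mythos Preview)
The paper does not prove Theorem~\ref{thm:Pit}; it is quoted from Pittel and Sorkin \cite{pittel2016} and used as a black box, so there is no ``paper's own proof'' to compare against. I will therefore assess your sketch on its merits.

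Your unsatisfiability direction is correct and is in fact the same rank argument Pittel and Sorkin use: $\P(\mathrm{SAT}\mid A)=2^{\mathrm{rank}(A)-n}\le 2^{m-n}\le 2^{-\omega(n)}$.

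The satisfiability direction has a genuine gap. Your plan is to bound $\E[D]$ in the \emph{unconstrained} configuration model and then pass to the constrained model by dividing by the probability of the event $C=\{\text{every variable has degree}\ge 2\}$, claiming $\P(C)$ is bounded below by a positive constant under $\liminf n/m>2/k$. This is false. In the configuration model the $nk$ variable occurrences are i.i.d.\ uniform over $[m]$, so the degree of a fixed variable is approximately Poisson with mean $\gamma=nk/m\in(2,k]$; hence $\P(\text{degree}\le 1)\approx e^{-\gamma}(1+\gamma)$ is a fixed positive number and the expected count of degree-$\le 1$ variables is $\Theta(m)$. Consequently $\P(C)$ is exponentially small in $m$, not bounded away from zero, and the inequality $\P(E\mid C)\le \P(E)/\P(C)$ blows up rather than preserving your polynomial bound. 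Put differently, conditioning on $C$ makes the rows of $A$ noticeably more correlated (every variable that appears once must appear again), so the unconstrained first-moment computation of $p_s$ does not transfer for free.

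Pittel and Sorkin avoid this by working directly in a constrained ``random sequence'' model: they fix a degree sequence with all entries $\ge 2$ (the truncated-Poisson profile), count configurations compatible with a putative left-kernel vector $y_S$, and carry out the first-moment/saddle-point analysis in that model. The passage between the configuration and uniform models is then done at the level of Lemma~7/Corollary~8 in \cite{pittel2016}, not by a crude conditioning bound. If you want to salvage your outline, the natural fix is to replace your $p_s$ by the analogous probability computed against the truncated-Poisson degree sequence; the generating-function identity changes from $\cosh^m$-type coefficients to ones built from $e^x-1-x$, and it is precisely this modification that makes the middle-range saddle-point analysis you flagged as ``most delicate'' go through.
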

\emph{Proof of Theorem~\ref{thm-main}:} Theorem~\ref{thm-main} now follows immediately from Proposition~\ref{prop:prep} and Theorem~\ref{thm:Pit}.

\section{Solutions to ODE's \eqref{eq:ODE} and \eqref{eq:2.8}}
\label{sec:ODE}
In this section we will discuss the properties of the solutions to \eqref{eq:ODE} and \eqref{eq:2.8} which will 
be used repeatedly throughout our analysis. The results are based on the continuity of $(\overrightarrow{x},\theta)\shortmid\rightarrow \mathfrak{p}_a(\overrightarrow{x},\theta)$, $a=0,1,2$ on the following compact subsets of $\mathbb{R}^2\times\mathbb{R}_{+}$:
$$\widehat{q}(\epsilon)\equiv \{(\overrightarrow{x},\theta):-k\leq x_1;0\leq x_2;\theta\in[0,1-\epsilon];0\leq(1-\theta)k - \mbox{max}(x_1,0)-2x_2\}\,,$$
and $$\widehat{q}_+(\epsilon)=\widehat{q}(\epsilon)\cap\{x_1\geq0\}\,.$$
\begin{lemma}\cite[Lemma~4.1]{DM08}
\label{lem:4.1}
For any $\epsilon > 0$, the functions $(\overrightarrow{x},\theta)\shortmid\rightarrow \mathfrak{p}_a(\overrightarrow{x},\theta)$, $a=0,1,2$ are $[0, 1]$-valued, Lipschitz continuous on $\widehat{q}(\epsilon)$. Further, on $\widehat{q}_+(\epsilon)$ the functions $(\overrightarrow{x},\theta)\shortmid\rightarrow \mathfrak{p}_a(\overrightarrow{x},\theta)$ have Lipschitz continuous partial derivatives.
\end{lemma}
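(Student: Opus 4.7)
The plan is to reduce everything to the analysis of the inverse of the single real-variable function $f_1$ appearing in \eqref{eq:2.3}, and to treat the two kinds of boundary points of $\widehat{q}(\epsilon)$ (namely $x_2 = 0$ and $\max(x_1,0) + 2x_2 = k(1-\theta)$) separately by explicit asymptotic expansions.

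First I would study $f_1(\lambda) = \lambda(e^\lambda - 1)/(e^\lambda - 1 - \lambda)$ as a function of a single real variable. A Taylor expansion at $0$ gives $f_1(0^+) = 2$, while at $+\infty$ one has $f_1(\lambda) = \lambda + O(\lambda e^{-\lambda})$. Differentiating and simplifying, one checks that $f_1'(\lambda) > 0$ on $(0,\infty)$, so $f_1$ is a smooth strictly increasing bijection from $(0,\infty)$ onto $(2,\infty)$, and its inverse $g \equiv f_1^{-1}$ is real analytic on $(2,\infty)$, extending continuously by $g(2) = 0$ and with $g(u) \sim u$ as $u \to \infty$. On the interior of $\widehat{q}(\epsilon)$ where $x_2 > 0$ and $\max(x_1,0) + 2x_2 < k(1-\theta)$, the quantity $u(\overrightarrow{x}, \theta) \equiv (k(1-\theta) - \max(x_1,0))/x_2$ lies strictly above $2$, so $\lambda = g(u)$ is a smooth function of $(\max(x_1,0), x_2, \theta)$, and each $\mathfrak{p}_a$ is correspondingly a smooth composition. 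The bound $\mathfrak{p}_a \in [0,1]$ is then automatic: $\mathfrak{p}_0 \in [0,1]$ follows from the defining inequality of $\widehat{q}(\epsilon)$, $\mathfrak{p}_2 \in [0,1]$ follows from the elementary estimate $f_1(\lambda) \geq \lambda$, and $\mathfrak{p}_1 = 1 - \mathfrak{p}_0 - \mathfrak{p}_2 \in [0,1]$ by complementarity.

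The core work is to verify that the $\mathfrak{p}_a$'s extend continuously to the whole compact set $\widehat{q}(\epsilon)$ and that their gradients remain uniformly bounded. Along the boundary $u \downarrow 2$ (equivalently $\lambda \downarrow 0$), the Taylor expansion of $g$ at $2$ together with the identity $e^\lambda - 1 - \lambda \sim \lambda^2/2$ control all ratios appearing in $\mathfrak{p}_1$ and $\mathfrak{p}_2$. Along the boundary $x_2 \downarrow 0$ we have $u \to \infty$, so $\lambda \to \infty$ with $\lambda \sim u$; the factor $e^{-\lambda}$ inside $\mathfrak{p}_1$ forces $\mathfrak{p}_1 \to 0$, while $\mathfrak{p}_2 = x_2 \lambda/(k(1-\theta))$ tends to $(k(1-\theta) - \max(x_1,0))/(k(1-\theta)) = 1 - \mathfrak{p}_0$, matching the continuous extension prescribed in the statement. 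To upgrade continuity to Lipschitz continuity on the compact set I would bound $\nabla \mathfrak{p}_a$ uniformly: using $\partial \lambda/\partial u = 1/f_1'(\lambda)$, one checks that after appropriate rescaling $f_1'(\lambda)$ stays bounded away from zero in both asymptotic regimes, and the prefactors $x_2$ and $1-\theta$ in the $\mathfrak{p}_a$'s cancel the potentially singular contributions from the chain rule.

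The second statement is essentially a corollary: on $\widehat{q}_+(\epsilon)$ one has $x_1 \geq 0$, so $\max(x_1,0) = x_1$ is $C^\infty$ and the only source of nonsmoothness disappears. The $\mathfrak{p}_a$'s then become compositions of genuinely smooth functions in the interior, and the same asymptotic analysis of $g$ near $2$ and near $\infty$ provides uniform bounds on their second partial derivatives, yielding Lipschitz continuous first derivatives on $\widehat{q}_+(\epsilon)$. The main obstacle I anticipate is precisely the bookkeeping near the two boundary regimes $\lambda \to 0$ and $\lambda \to \infty$, where several factors vanish or blow up simultaneously and one must cancel them at exactly the right order using the expansions of $f_1$; once these cancellations are made explicit, the Lipschitz bounds reduce to a standard compactness argument on $\widehat{q}(\epsilon)$.
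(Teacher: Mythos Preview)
The paper does not actually prove this lemma: it is quoted verbatim as \cite[Lemma~4.1]{DM08} and no proof is given, the result being imported from Dembo and Montanari's paper. So there is no in-paper argument to compare your proposal against.

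That said, your sketch is a correct and natural route to the result, and it is essentially the kind of analysis one would expect underlies the cited lemma. Reducing everything to the monotonicity and asymptotics of $f_1$ and its inverse $g = f_1^{-1}$ is exactly the right idea: once you know $f_1:(0,\infty)\to(2,\infty)$ is a smooth increasing bijection with $g(2^+)=0$, $g(u)\sim u$ at infinity, and $g'$ bounded on compacts of $(2,\infty)$, the chain-rule computations go through. Your identification of the two delicate boundary regimes --- $u\downarrow 2$ (equivalently $\lambda\downarrow 0$, the face $\max(x_1,0)+2x_2 = k(1-\theta)$) and $x_2\downarrow 0$ (equivalently $\lambda\to\infty$) --- is correct, and your limiting values for $\mathfrak p_1$ and $\mathfrak p_2$ in each regime match the continuous extensions in the paper. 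The observation that the only obstruction to differentiability on $\widehat q(\epsilon)$ is the kink of $\max(x_1,0)$ at $x_1=0$, which disappears on $\widehat q_+(\epsilon)$, is also the right explanation for the second assertion. The one place where your write-up is still genuinely a sketch rather than a proof is the uniform gradient bound near the two boundary regimes: you assert the needed cancellations but do not carry them out, and this is where the actual work lies (e.g.\ controlling $\partial_{x_2}\lambda$ as $x_2\to 0$, where $\lambda\to\infty$ while $x_2\to 0$ and the product $x_2\lambda$ stays bounded). Filling in those estimates would complete the argument.
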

The next proposition provides some important properties of the solutions for general $\rho$. 
Define for $\rho > 0$, $h_{\rho,1}(u)\equiv u - 1 + \exp(-ku^{l-1}/\rho)$ and $h_{\rho,2}(u)\equiv 1 - (1 + ku^{l-1}/\rho)\exp(-ku^{k-1}/\rho)$.
\begin{proposition}\cite[Proposition~4.2]{DM08}
\label{prop:4.2}
For any $\epsilon > 0, \theta < 1 - \epsilon$, the ODE~\eqref{eq:ODE} admits a unique solution $\overrightarrow{y}$ subject to the initial conditions \eqref{eq:intial_cond_vec}, and the ODE~\eqref{eq:2.8} admits a unique, positive definite, solution $\mathbb{Q}$ subject to the initial conditions \eqref{eq:2.10}, such that:
\begin{enumerate}[(a)]
\item
For any $\epsilon > 0, \theta < 1 - \epsilon$, we have that $(\overrightarrow{y}(\theta,\rho),\theta)$ is in the interior of $\widehat{q}(\epsilon)$, with both functions $(\theta,\rho)\shortmid\rightarrow \overrightarrow{y}$ and $\theta\shortmid\rightarrow\mathbb{Q}$ Lipschitz continuous on $(\theta,\rho)\in[0, 1 -\epsilon)\times[\epsilon, 1/\epsilon]$.
\item
Let $u(\theta)\equiv(1-\theta)^{1/k}$ and $\theta_-(\rho)\equiv\mbox{inf}\{\theta\geq0 :h_{\rho,1}(u(\theta))<0\}\wedge1$. Then, for $\theta\in[0,\theta_-(\rho)]$,
\begin{equation}
\label{eq:sol4.1}
y_1(\theta,\rho) = ku(\theta)^{k-1}[u(\theta) - 1 + e^{-\gamma u(\theta)^{k-1}}],
\end{equation}
\begin{equation}
\label{eq:sol4.2}
y_2(\theta,\rho) = \frac{k}{\gamma}[1 - e^{-\gamma u(\theta)^{k-1}} - \gamma u(\theta)^{k-1}e^{-\gamma u(\theta)^{k-1}}],
\end{equation}
(where $\gamma = k/\rho$). In particular, $(\theta,\rho)\shortmid\rightarrow \overrightarrow{y}$ is infinitely continuously differentiable
and $(\theta,\rho)\shortmid\rightarrow \mathbb{Q}$ is Lipschitz continuous on $\{(\theta, \rho) : \theta\leq\mbox{min}(\theta_-(\rho),1-\epsilon),\epsilon\leq\rho\leq1/\epsilon\}$.
\end{enumerate}
\end{proposition}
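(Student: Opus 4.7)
The plan is to handle part (b) first by guess-and-verify of the explicit solution, then apply standard ODE theory for part (a).

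For part (b), the change of variable $u(\theta) = (1-\theta)^{1/k}$ is suggested by the factor $k(1-\theta) = k u^k$ appearing in the denominators of $\mathfrak{p}_0, \mathfrak{p}_1, \mathfrak{p}_2$ in \eqref{eq:2.2}. Motivated by the initial condition \eqref{eq:intial_cond_vec} (where the $\lambda$ of \eqref{eq:2.3} equals $\gamma = k/\rho$ at $u = 1$), I would guess that $\lambda = \gamma u^{k-1}$ along the trajectory as long as $y_1 \geq 0$. Substituting the candidate \eqref{eq:sol4.1}--\eqref{eq:sol4.2} into \eqref{eq:2.3} yields, after multiplying numerator and denominator by $e^{\gamma u^{k-1}}$, the identity $f_1(\gamma u^{k-1}) = (k(1-\theta) - y_1)/y_2$, confirming the ansatz. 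With $\lambda$ known, $\mathfrak{p}_0$ and $\mathfrak{p}_1$ become explicit functions of $u$, and differentiating \eqref{eq:sol4.1}--\eqref{eq:sol4.2} using $du/d\theta = -u^{1-k}/k$ recovers exactly the right-hand side of \eqref{eq:ODE}; the initial condition matches \eqref{eq:intial_cond_vec} at $u = 1$. Uniqueness (Picard--Lindel\"of via Lemma~\ref{lem:4.1}) pins this down as the solution wherever $y_1 \geq 0$, which by \eqref{eq:sol4.1} is precisely the interval $[0, \theta_-(\rho)]$, and infinite differentiability in $(\theta, \rho)$ is immediate from the closed form.

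For part (a), Lemma~\ref{lem:4.1} gives Lipschitz continuity of $\overrightarrow{F}$ on each compact $\widehat{q}(\epsilon)$, so local existence and uniqueness follow from Picard--Lindel\"of. To extend to $[0, 1-\epsilon)$, I would show the trajectory stays in the interior of $\widehat{q}(\epsilon)$: on $[0, \theta_-(\rho)]$ this is manifest from part (b), and for $\theta > \theta_-(\rho)$ the dynamics reduces to a scalar equation on the face $y_1 = 0$ whose right-hand side $-(k-1)\mathfrak{p}_1$ is bounded by constants depending only on $\epsilon$, so that $y_2$ stays strictly positive and $2y_2 < k(1-\theta)$ by direct estimates. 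Joint Lipschitz continuity in $(\theta, \rho)$ then follows from a Gr\"onwall bound on the difference of two solutions starting from the Lipschitz initial data \eqref{eq:intial_cond_vec}. The matrix ODE \eqref{eq:2.8} is linear in $\mathbb{Q}$ with coefficients that are Lipschitz in $\theta$ via Lemma~\ref{lem:4.1} and part (a) for $\overrightarrow{y}$; existence and uniqueness are classical, and the variation-of-parameters representation $\mathbb{Q}(\theta) = \Psi(\theta)\mathbb{Q}(0)\Psi(\theta)^T + \int_0^\theta \Psi(\theta)\Psi(s)^{-1}\mathbb{G}(s)\Psi(s)^{-T}\Psi(\theta)^T\,ds$, with $\Psi$ the fundamental matrix of $\dot{\Psi} = \mathbb{A}\Psi$, exhibits $\mathbb{Q}$ as the sum of a positive definite and a positive semidefinite matrix, giving positive definiteness throughout $[0, 1-\epsilon)$.

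The main obstacle is producing the ansatz in part (b): equation \eqref{eq:2.3} defines $\lambda$ only implicitly, so without the guess $\lambda = \gamma u^{k-1}$ the algebra remains opaque. A clean justification for the guess is the probabilistic identity $y_2/\rho = 1 - (1 + \gamma u^{k-1}) e^{-\gamma u^{k-1}} = \mathbb{P}(\mathrm{Poisson}(\gamma u^{k-1}) \geq 2)$, matching the cavity-type interpretation of the peeling process in the configuration model; once the ansatz is in hand, the rest is bookkeeping.
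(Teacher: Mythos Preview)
The paper does not prove this proposition at all: it is quoted verbatim from \cite[Proposition~4.2]{DM08} and used as a black box. So there is no ``paper's own proof'' to compare against; your proposal is effectively a sketch of how one would reprove the cited result.

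That said, your outline is essentially the right one and matches what the original reference does. Part~(b) is indeed a guess-and-verify: the explicit formulas \eqref{eq:sol4.1}--\eqref{eq:sol4.2} satisfy the ODE and the initial condition, and Picard--Lindel\"of via Lemma~\ref{lem:4.1} gives uniqueness. Your identification of the ansatz $\lambda=\gamma u^{k-1}$ and the check that it solves \eqref{eq:2.3} are correct. One phrasing issue: after $\theta_-(\rho)$ the trajectory does \emph{not} stay on the face $y_1=0$; rather $y_1$ becomes negative, so $\max(y_1,0)=0$ and the $y_2$-equation decouples into a scalar ODE while $y_1$ is recovered by quadrature. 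This is what you need to argue that $(\overrightarrow{y}(\theta),\theta)$ remains in the interior of $\widehat q(\epsilon)$, and the ``direct estimates'' you allude to (bounding $y_1'$ below by $-k$, and controlling the invariant $k(1-\theta)-\max(y_1,0)-2y_2$ via $\mathfrak p_0+\mathfrak p_1+\mathfrak p_2=1$) do go through, but deserve a line or two rather than a wave of the hand. The treatment of \eqref{eq:2.8} via the variation-of-constants formula and positive definiteness of $\mathbb Q(0)$ is standard and correct.
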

Using the previous two results we can now derive several important properties of the solution to \eqref{eq:ODE} when $\rho = \rho_k$.
\begin{proposition}
\label{prop:coro_crit}
Define $\theta_*(\rho)\equiv\mbox{inf}\{\theta\geq0 :h_{\rho,1}(u(\theta))\leq0\}$. 
Then,
\begin{enumerate}[(a)]
\item $\theta_*(\rho) = 1$ for $\rho > \mbox{max}_{x>0}\frac{k(1-e^{-x})^{k-1}}{x}$. For $\rho \leq \mbox{max}_{x>0}\frac{k(1-e^{-x})^{k-1}}{x}$, $\theta_*(\rho) = 1 - (1 - e^{-\lambda_\rho})^k$ where $\lambda_\rho$ is the maximum positive solution to $\rho = \frac{k(1-e^{-\lambda})^{k-1}}{\lambda}$. Furthermore for these values of $\rho$, $y_2(\theta_*(\rho),\rho) = \frac{k}{f_1(\lambda_\rho)}(1-\theta_*(\rho))$ where $f_1$ is the same function as in \eqref{eq:2.3}.
\item \begin{enumerate}[(I)]
\item 
$\rho_k$ satisfies the equation $y_2(\theta_*(\rho_k),\rho_k) = 1 - \theta_*(\rho_k) > 0$.
\item Denoting $\theta_*(\rho_k)$ by $\theta_k$, we have for $\rho = \rho_k$, $y_1'(\theta_k) < 0$ and $y_2'(\theta_k) < 0$.
\item Although $\overrightarrow{y}(\theta)$ (for $\rho = \rho_k$) may not be twice continuously differentiable at $\theta = \theta_k$, $\overrightarrow{y}(\theta)$ is nonetheless twice continuously differentiable when considered on $[0,\theta_k]$ and $[\theta_k,\theta_k + \epsilon]$ separately for some $\epsilon > 0$.
\end{enumerate}
    \end{enumerate}
\end{proposition}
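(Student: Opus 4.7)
My plan is to prove parts (a) and (b) sequentially, leveraging the closed-form expressions for $\overrightarrow{y}$ in Proposition~\ref{prop:4.2}(b), which are valid on $[0,\theta_-(\rho)]$. For part (a), I would first analyze the positive roots of $h_{\rho,1}(u) = u - 1 + e^{-ku^{k-1}/\rho}$ on $(0,1]$. Via the substitution $u = 1-e^{-\lambda}$, the equation $h_{\rho,1}(u) = 0$ becomes $\rho = g(\lambda):=k(1-e^{-\lambda})^{k-1}/\lambda$; since $g(0^+)=g(+\infty)=0$ and an elementary calculus argument shows $g$ has a unique maximum $M_k := \max_{x>0} g(x)$, the number of positive roots of $h_{\rho,1}$ is zero for $\rho > M_k$ and at least one for $\rho \leq M_k$. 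When $\rho > M_k$ this forces $\theta_*(\rho) = 1$. When $\rho \leq M_k$, since $u(\theta) = (1-\theta)^{1/k}$ decreases monotonically from $1$ to $0$ as $\theta$ sweeps $[0,1]$, the first $\theta$ with $h_{\rho,1}(u(\theta)) \leq 0$ occurs at the \emph{largest} positive root $u = 1-e^{-\lambda_\rho}$, where $\lambda_\rho$ is the largest solution of $g(\lambda)=\rho$; hence $\theta_*(\rho) = 1-(1-e^{-\lambda_\rho})^k$. For the $y_2$ identity, I would substitute $e^{-\gamma u^{k-1}} = 1-u$ (from $h_{\rho,1}=0$) into \eqref{eq:sol4.2}, use $\gamma = k/\rho$ and $\rho\lambda_\rho = ku^{k-1}$ to simplify, and recognize $f_1(\lambda_\rho) = \lambda_\rho u/[u - (1-u)\lambda_\rho]$ to obtain $y_2(\theta_*(\rho),\rho) = (k/f_1(\lambda_\rho))(1-\theta_*(\rho))$ after short algebra.

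For part (b)(I), I would invoke Theorem~\ref{thm:Pit}: conditional on $(n_{\rm core}, m_{\rm core})$, the core is a uniformly random constrained $k$-XORSAT instance whose sharp satisfiability threshold is at variables/equations equal to $1$. Since $n_{\rm core}/n \to 1-\theta_*(\rho)$ and $m_{\rm core}/n \to y_2(\theta_*(\rho),\rho)$ in probability (by the concentration in Lemma~\ref{lem:concentration}), the threshold $\rho_k$ for the unconstrained system must satisfy $y_2(\theta_*(\rho_k),\rho_k) = 1-\theta_*(\rho_k)$, with $1-\theta_k > 0$ because the core is nontrivial at criticality (i.e., $\rho_k < M_k$). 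For part (b)(II), chain-rule differentiation of \eqref{eq:sol4.1} at $\theta_k$ gives $y_1'(\theta_k) = ku^{k-1}\,h_{\rho_k,1}'(u)\,u'(\theta_k)$ where $u = 1-e^{-\lambda_{\rho_k}}$ and $u'(\theta_k) < 0$. A direct computation yields $h_{\rho,1}'(u) = 1 - (k-1)(1-u)\lambda_\rho/u$, whose sign is \emph{opposite} to that of $g'(\lambda_\rho)$ at the same $\lambda_\rho$. Since $\lambda_{\rho_k}$ is the largest positive root of $g=\rho_k$ and $\rho_k<M_k$, we have $g'(\lambda_{\rho_k}) < 0$, hence $h_{\rho_k,1}'(u) > 0$ and $y_1'(\theta_k) < 0$. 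Setting $w = \gamma u^{k-1}$ in \eqref{eq:sol4.2} similarly gives $y_2'(\theta) = -(k-1)we^{-w}/u$; at $\theta_k$, $w = \lambda_{\rho_k}$ and $e^{-w}=1-u$, so $y_2'(\theta_k) = -(k-1)\lambda_{\rho_k}(1-u)/u < 0$.

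For part (b)(III), Proposition~\ref{prop:4.2}(b) gives $\overrightarrow{y} \in C^\infty$ on $[0,\theta_k]$ because the transversality $h_{\rho_k,1}'(u_+) > 0$ established in (II) forces $\theta_-(\rho_k) = \theta_k$. For $\theta$ slightly greater than $\theta_k$, the strict negativity $y_1'(\theta_k) < 0$ together with $y_1(\theta_k) = 0$ yields $y_1(\theta) < 0$, so $\max(y_1,0) = 0$ and the ODE decouples: $\mathfrak{p}_0 = 0$, $\lambda$ is determined implicitly by $f_1(\lambda) = k(1-\theta)/y_2$ (smooth in $(y_2,\theta)$ near $(1-\theta_k,\theta_k)$ by the implicit function theorem, since one can verify $f_1'(\lambda_{\rho_k}) \neq 0$), $y_2$ satisfies a $C^\infty$ scalar ODE, and $y_1$ is a smooth integral thereof. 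The main obstacle is verifying the strict inequality $\rho_k < M_k$, without which $\lambda_{\rho_k}$ would coincide with the critical point of $g$ and every sign argument in (II) would collapse; I would address this by evaluating the identity $y_2(\theta_*(\rho),\rho) = 1-\theta_*(\rho)$ of (I) at $\rho = M_k$ (where $g'(\lambda_{M_k}) = 0$ makes everything explicit), showing it fails strictly, and then invoking the uniqueness of the satisfiability threshold (from Pittel--Sorkin) to conclude $\rho_k < M_k$.
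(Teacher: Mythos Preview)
Your treatment of part (a) and part (b)(III) matches the paper's proof essentially line for line. The real divergences are in (b)(I) and (b)(II).

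For (b)(I), the paper does \emph{not} re-derive the characterizing equation for $\rho_k$ via concentration of core statistics. It simply quotes the explicit formula from \cite[Theorem~16]{pittel2016}: $\rho_k = k(1-e^{-\lambda_k})^{k-1}/\lambda_k$ where $\lambda_k$ is the unique positive solution of $f_1(\lambda)=k$. Plugging this into the identity from part (a) immediately gives $f_1(\lambda_{\rho_k})=k$ and hence $y_2(\theta_*(\rho_k),\rho_k)=1-\theta_*(\rho_k)>0$. Your proposed route has a genuine gap: Lemma~\ref{lem:concentration} concerns \emph{fixed} times $\tau$, not the random hitting time $\tau_c$, so the claim $n_{\rm core}/n\to 1-\theta_*(\rho)$ and $m_{\rm core}/n\to y_2(\theta_*(\rho),\rho)$ is not justified by it; controlling the hitting time is precisely what the later sections of the paper (and \cite{DM08}) are devoted to, and in particular uses (b)(II), which you have not yet proved. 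Moreover, deducing the threshold equation from core concentration plus Theorem~\ref{thm:Pit} \emph{is} the Pittel--Sorkin argument, so you would be re-proving their theorem rather than citing it.

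For (b)(II), the paper again takes a more direct route: it evaluates $y_1'(\theta_k)$ from the ODE \eqref{eq:ODE} (not from the closed form), obtaining $y_1'(\theta_k) = -1 + (k-1)\lambda_k/(e^{\lambda_k}-1)$, and then proves $(e^{\lambda_k}-1)/\lambda_k > k-1$ by an elementary argument: since $e^{\lambda_k}-1-\lambda_k \geq \lambda_k^2$ (verified via $\lambda_k > 2$, which follows from $f_1(2)<3\le k$ and monotonicity of $f_1$), one gets $(e^{\lambda_k}-1)/\lambda_k \geq f_1(\lambda_k) = k > k-1$. Your approach via the sign of $g'(\lambda_{\rho_k})$ is correct in principle and pleasantly geometric, but it hinges on the strict inequality $\rho_k < M_k$, which you then have to establish separately by showing $f_1(\lambda^*)\neq k$ at the critical point $\lambda^*$ of $g$. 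This is an additional computation the paper never needs, because working directly with $f_1(\lambda_k)=k$ bypasses any reference to $M_k$.
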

\begin{proof}
\begin{enumerate}[(a)]
\item Since $h_{\rho,1}(u(0)) > 0$ and $h_{\rho,1}(u(1)) = 0$ we have $\theta_*(\rho) \in (0,1]$ for all $\rho > 0$. Notice that 
$$h_{\rho,1}(u) > 0 \Leftrightarrow \exp\Big(-\frac{ku^{k-1}}{\rho}\Big) > 1-u\,,$$ 
for $u \in (0,1)$. Writing $-\log(1-u) = x \in (0,\infty)$ we find that,
\begin{eqnarray*}
\theta_*(\rho) = 1 \Leftrightarrow h_{\rho,1}(u) > 0\mbox{ for }u\in(0,1) \Leftrightarrow 
x > \frac{k(1-e^{-x})^{k-1}}{\rho} \Leftrightarrow \rho > \frac{k(1-e^{-x})^{k-1}}{x}\,.
\end{eqnarray*}
Hence $\theta_*(\rho) = 1$ for $\rho > \mbox{max}_{x>0}\tfrac{k(1-e^{-x})^{k-1}}{x}$. This chain of equivalences also give us that 
$$\theta_*(\rho) = 1 - (1 - e^{-\lambda_\rho})^k$$   
for $\rho \leq \mbox{max}_{x>0}\frac{k(1-e^{-x})^{k-1}}{x}$ and $\lambda_\rho$ being the 
maximum positive solution to $\rho = \tfrac{k(1-e^{-\lambda})^{k-1}}{\lambda}$. The expression for $y_2(\theta_*(\rho),\rho)$ now follows from a routine algebra:
\begin{eqnarray*}
y_2(\theta_*(\rho),\rho) &=& \rho h_{\rho,2}(u(\theta_*(\rho))) = \frac{k(1-e^{-\lambda_\rho})^k}{\lambda_\rho} - k(1-e^{-\lambda_\rho})^{k-1} + k(1-e^{-\lambda_\rho})^k\\
                            &=&  k\frac{1 - e^{-\lambda_{\rho}}(1+\lambda_{\rho})}{\lambda_{\rho}(1 - e^{-\lambda_\rho})}(1 - e^{-\lambda_\rho})^k = \frac{k}{f_1(\lambda_\rho)}(1-\theta_*(\rho))\,.
\end{eqnarray*}
\item \begin{enumerate}[(I)]
\item \cite[Theorem~16]{pittel2016} tells us that
$$\rho_k = \frac{k(1-e^{-\lambda_k})^{k-1}}{\lambda_k}\,,$$ 
where $\lambda_k$ is the unique positive solution to $f_1(\lambda_\rho) = k$. $\lambda_k > 0$ since $\lim_{\lambda \to 0+}f_1(\lambda) = 2$ and $k \geq 3$. Thus from part (a) we get $y_2(\theta_*(\rho_k), \rho_k) = 1 - \theta_*(\rho_k) > 0$.

\item From \eqref{eq:ODE} we get at $\rho = \rho_k$, 
$$y_1'(\theta_k) = -1 + (k-1)(\mathfrak{p}_1(\overrightarrow{y}(\theta_k,\rho_k),\theta_k) - \mathfrak{p}_0(\overrightarrow{y}(\theta_k,\rho_k),\theta_k))\,.$$ 
Since $y_1(\theta_k,\rho_k) = 0$, we have
\begin{eqnarray*}
y_1'(\theta_k) = (k-1)\frac{y_2(\theta_k,\rho_k)\lambda_c^2}{k(1 - \theta_k)(e^{\lambda_c} - 1 - \lambda_c)} = (k-1)\Big(\frac{\lambda_c^2}{k(e^{\lambda_c} - 1 - \lambda_c)} - \frac{1}{k-1}\Big)
\end{eqnarray*}
where $f_1(\lambda_c) = \tfrac{k(1-\theta_k)}{y_2(\theta_k,\rho_k)} = k$ i.e. $\lambda_c = 
\lambda_k$ (see the discussion around \eqref{eq:2.3}). 
Hence we can write 
$$y_1'(\theta_k) = (k-1)\Big(\frac{\lambda_l}{e^{\lambda_k} - 1} - \frac{1}{k-1}\Big)\,.$$
Thus in order to prove $y_1'(\theta_k) < 0$, we just need to show $\tfrac{e^{\lambda_k} - 1}{\lambda_k} > k-1$. Now notice that
$$f_1(\lambda_k) = \frac{\lambda_k(e^{\lambda_k - 1})}{e^{\lambda_1} - \lambda_k - 1} = k > k-1\,.$$
So it suffices to show $e^{\lambda_k} \geq 1 + \lambda_k + \lambda_k^2$. But for $x > 0$, 
$$e^x - 1 - x - x^2 > -\frac{-x^2}{2} + \frac{x^3}{6} + \frac{x^4}{24} = \frac{x^2}{24}(x^2 + 4x - 12) >0$$ 
whenever $x\geq2$. Since $f_1(\lambda_k) = k \geq 3$, $f_1(2) < 3$ and $f_1(x)$ is strictly increasing on $[0,\infty)$, it follows that $\lambda_k > 2$. On the other hand 
$$y_2'(\theta_k) = -(k-1)\mathfrak{p}_1(\overrightarrow{y}(\theta_k,\rho_k),\theta_k) =  -(k - 1)\frac{(1 - \theta_k)\lambda_k^2}{k(1-\theta_k)(e^\lambda_k - 1 - \lambda_k)} < 0\,.$$

\item Part (b) of Proposition~\ref{prop:4.2} already settles the $\theta \leq \theta_k$ part. For $\theta \geq \theta_k$ part, notice that from Lemma~\ref{lem:4.1}, Proposition~\ref{prop:4.2} and the previous parts of the current proposition we get an $\epsilon \in (0, 1 - \theta_k)$ such that 
$$F_1\big(y_1(\theta),\theta\big) < 0\,, \mbox{ and } k - \frac{1}{4} < \frac{k(1-\theta)}{y_2(\theta)} < k + \frac{1}{4}$$
on $(\theta_k,\theta_k+\epsilon]$. The former implies $y_1(\theta) < 0$ on $(\theta_k,\theta_k+\epsilon]$ and consequently 
$\mathfrak{p}_0\big(\overrightarrow{y}(\theta),\theta\big) = 0$. Hence for $\theta \in [\theta_k,\theta_k + \epsilon]$ we have from \eqref{eq:2.2} and \eqref{eq:2.3} that 
$$1 - \mathfrak{p}_1\big(\overrightarrow{y}(\theta),\theta\big) = \mathfrak{p}_2\big(\overrightarrow{y}(\theta),\theta\big) = f_1^{-1}\Big(\frac{k(1-\theta)}{y_2(\theta)}\Big)\frac{y_2(\theta)}{k(1-\theta)}\,.$$
Since $F_2\big(y_2(\theta),\theta\big)$ is Lipschitz continuous on $[\theta_k,\theta_k + \epsilon]$ by Lemma~\ref{lem:4.1}, it suffices to prove that $f_1^{-1}\big(\tfrac{k(1-\theta)}{y_2(\theta)}\big)$ is continuously differentiable on $[\theta_k,\theta_k+\epsilon]$. But we have already shown $\tfrac{k(1-\theta)}{y_2(\theta)}$ 
lies between $k-\tfrac{1}{4}$ and $k + \tfrac{1}{4}$ on the same set. The proof is now complete with the observation that $f_1^{-1}$ is continuously differentiable on $[k-\frac{1}{4},k+\frac{1}{4}]$. \qedhere
\end{enumerate}
\end{enumerate}
\end{proof}
Consider a new sequence of vectors defined as
\begin{equation}
\label{eq:4.7}
\overrightarrow{y}^*(\tau + 1) = \overrightarrow{y}^*(\tau) + n^{-1}\widetilde{\mathbb{A}}_{\tau}(\overrightarrow{y}^*(\tau) - \overrightarrow{y}(\tau/n)) + n^{-1}\overrightarrow{F}(\overrightarrow{y}(\tau/n), \frac{\tau}{n})\,,
\end{equation}
where $\overrightarrow{y}^*(0) \equiv \overrightarrow{y}(0,\rho)$ and $\widetilde{\mathbb{A}}_{\tau} \equiv \mathbb{I}_{\tau < \tau_n}\mathbb{A}(\overrightarrow{y}(\tau/n,\rho),\tau/n)$. Also define the positive definite matrices
\begin{equation}
\label{eq:4.8}
\mathbb{Q}_{\tau} = \widetilde{\mathbb{B}}_0^{\tau - 1}\mathbb{Q}(0,\rho)(\widetilde{\mathbb{B}}_0^{\tau - 1})^T + \frac{1}{n}\sum\limits_{\sigma = 0}^{\tau-1}\widetilde{\mathbb{B}}_{\sigma+1}^{\tau - 1}\mathbb{G}\big(\overrightarrow{y}(\sigma/n),\sigma/n\big)(\widetilde{\mathbb{B}}_{\sigma+1}^{\tau - 1})^T
\end{equation}
where $\widetilde{\mathbb{B}}_{\sigma}^{\tau}$ is same as in \eqref{eq:2.13}. \eqref{eq:4.7} and \eqref{eq:4.8} are discrete recursions corresponding to the mean and covariance of the process $\overrightarrow{z}'(.)$ of \eqref{eq:2.12}. The lemma below shows that $\overrightarrow y^*(\tau)$ and $\mathbb Q_\tau$ are near the solutions of appropriate ODE's up to time $\tau_n \equiv \lfloor n\theta_k - n^{\beta}\rfloor$ when $\rho$ is near 
$\rho_k$. The lemma and its proof are very similar to \cite[Lemma~4.3]{DM08} except for a few modifications.
\begin{lemma}
\label{lem:4.3}
Fixing $\beta \in (\frac{1}{2},1)$ and $\beta' < \beta$, we have for all sufficiently large $n$ and $|\rho - \rho_k|\leq n^{\beta' - 1}$,
\begin{equation}
\label{eq:4.9a}
\bigl\lvert y_1^*(\tau_n) + \frac{\partial{y_1}}{\partial{\theta}}(\theta_k,\rho_k)n^{\beta - 1} - (\rho - \rho_k)\frac{\partial{y_1}}{\partial{\rho}}(\theta_k,\rho_k)\bigr\rvert \leq Cn^{2(\beta-1)},
\end{equation}
\begin{equation}
\label{eq:4.9b}
\bigl\lvert y_2^*(\tau_n) - (1 - \theta_k) + \frac{\partial{y_2}}{\partial{\theta}}(\theta_k,\rho_k)n^{\beta - 1} - (\rho - \rho_k)\frac{\partial{y_2}}{\partial{\rho}}(\theta_k,\rho_k)\bigr\rvert \leq Cn^{2(\beta-1)},
\end{equation}
for some positive $C = C(\beta,\beta')$.\\
Furthermore, the matrices $\{\widetilde{\mathbb{B}}_{\sigma}^{\tau}: \sigma, \tau \leq n\}$ and their inverses are uniformly bounded with respect to the $L_2$-operator norm (denoted $\parallel.\parallel$) and
\begin{equation}
\label{eq:4.10}
\parallel\mathbb{Q}_{\tau_n} - \mathbb{Q}(\theta_k,\rho_k)\parallel \leq Cn^{\beta-1}
\end{equation}
for all $n$.
\end{lemma}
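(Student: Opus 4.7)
The plan is to read the discrete recursion \eqref{eq:4.7} as a perturbed Euler scheme for the ODE \eqref{eq:ODE}: first show that $\overrightarrow{y}^*(\tau)$ shadows $\overrightarrow{y}(\tau/n,\rho)$ to accuracy $O(n^{-1})$ up to time $\tau_n$, and then Taylor-expand $\overrightarrow{y}(\tau_n/n,\rho)$ jointly in $\theta$ and $\rho$ around $(\theta_k,\rho_k)$ to read off \eqref{eq:4.9a}-\eqref{eq:4.9b}. A parallel argument handles \eqref{eq:4.10} via a variation-of-parameters representation for the matrix-valued ODE \eqref{eq:2.8}.

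For the shadowing estimate, set $\overrightarrow{e}(\tau) := \overrightarrow{y}^*(\tau) - \overrightarrow{y}(\tau/n,\rho)$. Using $\overrightarrow{y}((\tau+1)/n,\rho) = \overrightarrow{y}(\tau/n,\rho) + n^{-1}\overrightarrow{F}(\overrightarrow{y}(\tau/n,\rho),\tau/n) + O(n^{-2})$ (valid on $[0,\tau_n/n]$ by the $C^2$ regularity in Proposition~\ref{prop:coro_crit}(b)(III)), the recursion \eqref{eq:4.7} reduces to $\overrightarrow{e}(\tau+1) = (I + n^{-1}\widetilde{\mathbb{A}}_\tau)\,\overrightarrow{e}(\tau) + O(n^{-2})$ with $\overrightarrow{e}(0) = 0$. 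Iterating and invoking the uniform boundedness of $\widetilde{\mathbb{B}}_\sigma^\tau$ proved below yields $\|\overrightarrow{e}(\tau_n)\| = O(\tau_n n^{-2}) = O(n^{-1})$, which is $O(n^{2(\beta-1)})$ since $\beta > 1/2$. Next, Proposition~\ref{prop:4.2}(b) together with Proposition~\ref{prop:coro_crit}(b)(III) ensure $(\theta,\rho) \mapsto \overrightarrow{y}(\theta,\rho)$ is $C^2$ on a closed region of the form $\{\theta \leq \theta_k,\,|\rho - \rho_k| \leq \epsilon_0\}$, so with $\tau_n/n - \theta_k = -n^{\beta-1} + O(n^{-1})$ and $|\rho - \rho_k| \leq n^{\beta'-1}$, Taylor's theorem gives
\[
\overrightarrow{y}(\tau_n/n,\rho) = \overrightarrow{y}(\theta_k,\rho_k) + (\tau_n/n - \theta_k)\,\partial_\theta\overrightarrow{y}(\theta_k,\rho_k) + (\rho - \rho_k)\,\partial_\rho\overrightarrow{y}(\theta_k,\rho_k) + O(n^{2(\beta-1)}),
\]
since every quadratic remainder is dominated by $n^{2(\beta-1)}$. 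Substituting $y_1(\theta_k,\rho_k) = 0$ and $y_2(\theta_k,\rho_k) = 1 - \theta_k$ from Proposition~\ref{prop:coro_crit}(b)(I) and combining with the shadowing bound yields \eqref{eq:4.9a} and \eqref{eq:4.9b}.

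For the auxiliary estimates on $\widetilde{\mathbb{B}}_\sigma^\tau$ and the covariance bound \eqref{eq:4.10}: Lemma~\ref{lem:4.1} and Proposition~\ref{prop:4.2}(a) give $\|\mathbb{A}(\overrightarrow{y}(\theta,\rho),\theta)\| \leq M_0$ uniformly on $[0,\theta_k]\times[\rho_k - \epsilon_0,\rho_k + \epsilon_0]$, hence $\|\widetilde{\mathbb{B}}_\sigma^\tau\| \leq (1 + n^{-1}M_0)^n \leq e^{M_0}$, and for $n$ large each $(I + n^{-1}\widetilde{\mathbb{A}}_\tau)^{-1}$ satisfies the same kind of bound. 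Writing the solution of \eqref{eq:2.8} by variation of parameters as $\mathbb{Q}(\theta) = \Phi_0^\theta\mathbb{Q}(0)(\Phi_0^\theta)^T + \int_0^\theta \Phi_s^\theta\,\mathbb{G}(\overrightarrow{y}(s),s)\,(\Phi_s^\theta)^T\,ds$, with $\Phi$ the fundamental matrix of $d\Phi/d\theta = \mathbb{A}\Phi$, the product $\widetilde{\mathbb{B}}_\sigma^\tau$ is the Euler approximation of $\Phi_{\sigma/n}^{\tau/n}$ with uniform error $O(n^{-1})$, so \eqref{eq:4.8} is a Riemann-sum approximation of the integral representation with total error $O(n^{-1})$. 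Lipschitz continuity of $\mathbb{Q}(\theta,\rho)$ from Proposition~\ref{prop:4.2}(a) then gives $\|\mathbb{Q}(\tau_n/n,\rho) - \mathbb{Q}(\theta_k,\rho_k)\| = O(n^{\beta-1})$, completing \eqref{eq:4.10}.

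The main obstacle is the bookkeeping of smoothness of $\overrightarrow{y}$ and $\mathbb{Q}$ in $\rho$ \emph{uniformly up to} $\theta = \theta_k$. Proposition~\ref{prop:coro_crit}(b)(III) combined with Proposition~\ref{prop:4.2}(b) is essential here, since it is what lets us differentiate the closed-form expressions \eqref{eq:sol4.1}-\eqref{eq:sol4.2} in both variables at $(\theta_k,\rho_k)$ when approached from the left. A minor issue is the floor in $\tau_n = \lfloor n\theta_k - n^\beta \rfloor$, which introduces an $O(n^{-1})$ defect absorbed into the $O(n^{2(\beta-1)})$ bound.
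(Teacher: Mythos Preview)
Your approach is the same as the paper's (which defers most of the work to \cite[Lemma~4.3]{DM08}): treat \eqref{eq:4.7} as a perturbed Euler scheme for \eqref{eq:ODE}, get $O(n^{-1})$ shadowing, Taylor-expand $\overrightarrow{y}(\tau_n/n,\rho)$ at $(\theta_k,\rho_k)$, and compare the discrete product \eqref{eq:4.8} with the variation-of-parameters representation of \eqref{eq:2.8}. The Euler, Taylor, and variation-of-parameters steps you write out are precisely what ``mimicking \cite{DM08}'' means in the paper's proof, and they are carried out correctly.

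However, you gloss over the one step the paper \emph{does} spell out, and it is not free. You assert that Lemma~\ref{lem:4.1} and Proposition~\ref{prop:4.2}(a) give $\|\mathbb{A}(\overrightarrow{y}(\theta,\rho),\theta)\|\le M_0$ uniformly on a \emph{fixed} rectangle $[0,\theta_k]\times[\rho_k-\epsilon_0,\rho_k+\epsilon_0]$. They do not: Lemma~\ref{lem:4.1} guarantees Lipschitz \emph{partial derivatives} of the $\mathfrak{p}_a$ (hence boundedness of $\mathbb{A}$) only on $\widehat{q}_+(\epsilon)=\widehat{q}(\epsilon)\cap\{x_1\ge 0\}$, while Proposition~\ref{prop:4.2}(a) places $\overrightarrow{y}$ only in the interior of $\widehat{q}(\epsilon)$. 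Since $y_1(\theta_k,\rho_k)=0$, for $\rho$ slightly below $\rho_k$ the trajectory can leave $\{y_1\ge 0\}$ before $\theta_k$, so a uniform bound on your fixed rectangle is not available. The paper's remedy is to restrict to the $n$-dependent domain $[0,\theta_n]\times\{|\rho-\rho_k|\le n^{\beta'-1}\}$: from $\partial_\theta y_1(\theta_k,\rho_k)<0$ one gets $y_1(\theta,\rho_k)\ge c_0 n^{\beta-1}$ for $\theta\le\theta_n$, and Lipschitz continuity in $\rho$ together with $\beta'<\beta$ then yields $y_1(\theta,\rho)\ge c_1 n^{\beta-1}>0$ throughout, so $\overrightarrow{y}\in\widehat{q}_+(\epsilon)$ and the bound on $\mathbb{A}_\tau$ (hence on $\widetilde{\mathbb{B}}_\sigma^\tau$ and its inverse) follows. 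This is also where the hypothesis $\beta'<\beta$ is actually used in the boundedness part of the lemma; in your write-up it appears only implicitly in the Taylor remainder bookkeeping. Once you insert this positivity argument, your proof is complete and matches the paper's.
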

\begin{proof}
\label{proof4.3}
From part (a) of Proposition~\ref{prop:4.2} we get that $\overrightarrow{y}(\theta,\rho)\in\widehat{q}(\epsilon)$ for $\theta \leq 1 - 2\epsilon$ and $\rho \in [\epsilon, 1/\epsilon]$. 
Now choose $\epsilon < (1 - \theta_k)/2$. Part (b) of Proposition~\ref{prop:coro_crit} says $y_1(\theta, \rho_k) > 0$ for $0 \leq \theta < \theta_k$ while $y_1(\theta_k, \rho_k) = 0$. From the same proposition we also have that $\frac{\partial{y_1}}{\partial{\theta}}(\theta_k,\rho_k) 
< 0$. Hence from Lipschitz continuity of $\mathfrak{p}_a(\overrightarrow{x},\theta)$, ($a=0,1,2$) on $\widehat{q}(\epsilon)$, as given by Lemma~\ref{lem:4.1}, we get $y_1(\theta,\rho_k) \geq c_0n^{\beta - 1}$ for some positive $c_0 = c_0(\epsilon)$ 
whenever $0\leq\theta\leq\theta_n = \tau_n/n$. Combined with the fact that $(\theta,\rho)\shortmid\rightarrow \overrightarrow{y}$ is Lipschitz continuous on $(\theta,\rho)\in[0, 1 -\epsilon)\times[\epsilon, 1/\epsilon]$ (part~(a), Proposition~\ref{prop:4.2}), this further implies $y_1(\theta,\rho)\geq c_1n^{\beta - 1}$ for some positive $c_1 = c_1(\epsilon,\beta,\beta')$ whenever $0\leq\theta\leq\theta_n$, $|\rho - \rho_k| \leq n^{\beta' - 1}$ and $n\geq n_0(\epsilon,\beta,\beta')$. 
Consequently $\overrightarrow{y}\in\widehat{q}_+(\epsilon)$ for $0\leq\theta\leq\theta_n$, 
$|\rho - \rho_k|\leq n^{\beta'-1}$ and all such $n$. Hence by Lemma~\ref{lem:4.1} we obtain that the entries of the matrices $\mathbb{A}_{\tau}$ are bounded uniformly in $\tau \leq \tau_n$, $|\rho - \rho_k|\leq n^{\beta'-1}$ and $n$ as before. From the expression of $\mathbb{B}_{\sigma}^{\tau}$ in \eqref{eq:2.13} we thus conclude that the matrices $\{\mathbb{B}_{\sigma}^{\tau}:\sigma, \tau \leq n\}$ and their inverses are bounded w.r.t. the $L_2$ operator norm uniformly in 
$|\rho - \rho_k|\leq n^{\beta'-1}$ and $n$.
The remaining part of the proof can be completed by mimicking the same in \cite{DM08}.
\end{proof}
\section{Gaussian approximation and the proof of Proposition~\ref{prop:prep}}
\label{sec:approximation}
This section is devoted to proving 
Proposition~\ref{prop:prep}. Notice that $n_{core} = n - \tau_c$ where $\tau_c$ is the first time the process 
$\{\overrightarrow{z}(\tau)\}$ hits the $z_1 = 0$ line and $m_{core} = z_2(\tau_c)$. Our goal is to estimate the 
distribution of $z_2(\tau_c) - (n-\tau_c)$. We will begin with an approximation of the process $\overrightarrow{z}'(\tau)$ of \eqref{eq:2.12} by a 
suitable Gaussian process. To this end let $\mathscr G_d(.|\overrightarrow{x},\mathbb{A})$ denote the $d$-dimensional Gaussian distribution with mean 
$\overrightarrow{x}$ and covariance $\mathbb{A}$. We denote by $\overline{\mathbb{P}}^G_{n,\rho}(.)$ the law of a $\mathbb{R}^2$-valued process
$\{\overrightarrow{z}^{''}(\tau)\}_{0\leq\tau\leq n}$ where $\overrightarrow{z}^{''}(0)$ has the uniform distribution $\mathbb{P}_{\mathcal{G}_k(n, \lfloor n\rho \rfloor)}$ on the graph ensemble $\mathcal{G}_k(n, \lfloor n\rho \rfloor)$ and
\begin{eqnarray}
\label{eq:Gauss1}
&&\overline{\mathbb{P}}^G_{n,\rho}(\overrightarrow{z}^{''}(\tau + 1)=\overrightarrow{z}^{''}(\tau) + \Delta'_{\tau} + \widetilde{\mathbb{A}}_{\tau}(n^{-1}\overrightarrow{z}^{''} - \overrightarrow{y}(\tau/n))|\overrightarrow{z}^{''}(\tau) = \overrightarrow{z}^{''})\nonumber \\
&=&\mathscr G_2\Big(\Delta'_{\tau}|\overrightarrow{F}(\overrightarrow{y}(\tau/n),\tau/n),\mathbb{G}(\overrightarrow{y}(\tau/n),\tau/n)\Big),
\end{eqnarray}
for $\tau < \tau_n$. For $\tau \in J_n \equiv [n\theta_k - n^\beta, n\theta_k + n^\beta]$ 
and $|\rho - \rho_k|\leq n^{\beta' -1}$, we expect $\Delta'_{\tau}$'s to be distributed like $\Delta'_{\lfloor n\theta_k\rfloor}$ which has mean $\big((k-1)\mathfrak{p}_1(\overrightarrow{y}(\theta_k,\rho_k),\theta_k) - 1,-(k-1)\mathfrak{p}_1(\overrightarrow{y}(\theta_k,\rho_k),\theta_k)\big)$ and a positive semidefinite covariance matrix $\mathbb{G}'(\theta_k,\rho_k)$ whose both diagonal entries are $\mathbb{G}_{11}(\overrightarrow{y}(\theta_k,\rho_k),\theta_k)$. So for $\tau \geq \tau_n$, we modify the transition kernel as follows:
\begin{eqnarray}
\label{eq:Gauss2}
&&\overline{\mathbb{P}}^G_{n,\rho}(\overrightarrow{z}^{''}(\tau + 1)=\overrightarrow{z}^{''}(\tau) + \Delta'_{\tau} + \widetilde{\mathbb{A}}_{\tau}(n^{-1}\overrightarrow{z}^{''} - \overrightarrow{y}(\tau/n))|\overrightarrow{z}^{''}(\tau) = \overrightarrow{z}^{''})\nonumber\\
&&=\mathscr G_2\Big(\Delta'_{\tau}|\overrightarrow{F}(\overrightarrow{y}(\theta_k,\rho_k),\theta_k),\mathbb{G}'(\theta_k,\rho_k)\Big).
\end{eqnarray}

\begin{lemma}
\label{lem:6.1}
Fixing $\beta\in(\frac{1}{2},\frac{3}{4})$, set $0 < 
\beta' < \beta$ and $2\beta - 1 < \beta^{''} < 2(1 - 
\beta)$. Then there exist $\alpha = \alpha(\beta, \beta'', k) > 0$, $0 < \delta_1 = \delta_1(\beta, \beta'') < \frac{1}{2}$ and a coupling between the processes $\{\overrightarrow{z}'(\tau)\}$ of distribution $\overline{\mathbb{P}}_{n,\rho}(.)$ and $\{\overrightarrow{z}^{''}(\tau)\}$ of distribution $\overline{\mathbb{P}}^G_{n,\rho}(.)$ such that for all large $n$ and $|\rho - \rho_k|\leq n^{\beta' - 1}$,
\begin{equation}
\label{eq:5.18_2}
\mathbb{P}(\sup\limits_{\tau \in J_n}\parallel\overrightarrow{z}'(\tau) - \overrightarrow{z}^{''}(\tau)\parallel\geq n^{\delta_1})\leq \alpha n^{-\beta^{''}}\,.
\end{equation}
\end{lemma}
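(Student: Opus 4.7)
The plan is to exploit the identical linear architecture shared by the two chains. Couple the initial conditions by drawing a single sample from $\mathbb{P}_{\mathcal{G}_k(n,\lfloor n\rho\rfloor)}$ so that $\overrightarrow{z}'(0) = \overrightarrow{z}''(0)$, and write $U_\tau := \overrightarrow{z}'(\tau) - \overrightarrow{z}''(\tau)$ and $D_\sigma := \Delta_\sigma - \Delta'_\sigma$. Subtracting \eqref{eq:2.12} from its Gaussian analogue in \eqref{eq:Gauss1}--\eqref{eq:Gauss2} yields
\begin{equation*}
U_{\tau+1} = \bigl(\mathbb{I} + n^{-1}\widetilde{\mathbb{A}}_\tau\bigr)U_\tau + D_\tau, \qquad U_\tau = \sum_{\sigma=0}^{\tau-1}\widetilde{\mathbb{B}}_{\sigma+1}^{\tau-1}D_\sigma,
\end{equation*}
so the problem reduces to building a joint law of $\{(\Delta_\sigma,\Delta'_\sigma)\}$ that keeps the partial sums $S_\tau := \sum_{\sigma<\tau}D_\sigma$ small uniformly in $\tau\in J_n$. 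Since Lemma~\ref{lem:4.3} supplies a uniform operator-norm bound on the transfer matrices $\widetilde{\mathbb{B}}_{\sigma+1}^{\tau-1}$, and since $\widetilde{\mathbb{B}}_{\sigma+1}^{\tau-1}-\widetilde{\mathbb{B}}_{\sigma}^{\tau-1} = -n^{-1}\widetilde{\mathbb{B}}_{\sigma+1}^{\tau-1}\widetilde{\mathbb{A}}_\sigma$ has norm $O(n^{-1})$, an Abel summation then converts any bound of the form $\max_\sigma \|S_\sigma\|\leq B$ into $\max_\tau\|U_\tau\|\leq CB$.

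For $\sigma<\tau_n$ the noises $\Delta_\sigma$ (uniformly bounded in $\mathbb{Z}^2$) and $\Delta'_\sigma$ (Gaussian) are \emph{designed} to share mean $\overrightarrow{F}(\overrightarrow{y}(\sigma/n),\sigma/n)$ and covariance $\mathbb{G}(\overrightarrow{y}(\sigma/n),\sigma/n)$. I would invoke a multidimensional strong approximation inequality of Sakhanenko/Einmahl/Zaitsev type---applicable because Lemma~\ref{lem:4.3} ensures the shared covariance stays uniformly nondegenerate on $[0,\tau_n]$ when $|\rho - \rho_k|\leq n^{\beta'-1}$---to obtain a coupling under which $\sup_{\tau\leq\tau_n}\|S_\tau\|\leq c_0\log n$ outside an event of probability $O(n^{-p})$ for any prescribed $p>0$. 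The Abel step then yields $\sup_{\tau\leq\tau_n}\|U_\tau\|\leq c_1\log n$ on the same event, well inside any $n^{\delta_1}$ budget.

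For $\tau\in(\tau_n, n\theta_k + n^\beta]$ the matrix $\widetilde{\mathbb{A}}_\tau$ vanishes and so $U_{\tau+1} = U_\tau + D_\tau$, but now the two noises no longer share moments. By the one-sided $C^2$ regularity of $\overrightarrow{y}$ on $[\theta_k,\theta_k+\epsilon]$ proved in Proposition~\ref{prop:coro_crit} together with the Lipschitz estimates of Lemma~\ref{lem:4.1}, both the mean gap $\overrightarrow{F}(\overrightarrow{y}(\tau/n),\tau/n)-\overrightarrow{F}(\overrightarrow{y}(\theta_k),\theta_k)$ and the covariance gap $\mathbb{G}(\overrightarrow{y}(\tau/n),\tau/n)-\mathbb{G}'(\theta_k,\rho_k)$ are $O(|\tau/n-\theta_k|)=O(n^{\beta-1})$ per step. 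Summed over the $O(n^\beta)$ post-$\tau_n$ times, the mean mismatch creates a deterministic bias of order $n^{2\beta-1}$, whereas coupling $\Delta_\tau$ first to a Gaussian of matched covariance (a second strong-approximation step) and then to $\Delta'_\tau$ via the common standard-normal coupling $\xi\mapsto(\Sigma^{1/2}\xi,\Sigma'^{1/2}\xi)$ yields a random fluctuation of total standard deviation $O(n^{\beta-1/2})$, whose sub-Gaussian tails are much smaller than any polynomial.

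Combining the three contributions yields
\begin{equation*}
\sup_{\tau\in J_n}\|U_\tau\|\leq C\bigl(\log n + n^{\beta-1/2}+ n^{2\beta-1}\bigr)
\end{equation*}
off an event of total probability at most $\alpha n^{-\beta''}$, and any choice of $\delta_1\in(2\beta-1,\tfrac12)$---an interval that is non-empty precisely because $\beta<\tfrac34$---absorbs all three error terms into $n^{\delta_1}$, delivering \eqref{eq:5.18_2}. The principal technical obstacle is the strong-approximation step itself: one needs a vector-valued KMT/Sakhanenko-style inequality with polynomial failure probability matching the target $n^{-\beta''}$, applied uniformly in $n$ and in $\rho$ with $|\rho-\rho_k|\leq n^{\beta'-1}$, and the uniformity is exactly where the regularity and nondegeneracy inputs of Section~\ref{sec:ODE} and Lemma~\ref{lem:4.3} become indispensable.
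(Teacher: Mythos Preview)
Your overall architecture matches the paper's: exploit the shared linear recursion, reduce to controlling the innovation differences $D_\sigma=\Delta_\sigma-\Delta'_\sigma$, invoke a multidimensional Sakhanenko/Zaitsev coupling for $\sigma<\tau_n$, and then treat $J_n$ separately. Your Abel-summation remark converting $\sup\|S_\tau\|$ into $\sup\|U_\tau\|$ via the uniform bound on $\widetilde{\mathbb B}_{\sigma+1}^{\tau-1}$ is in fact more explicit than the paper's write-up of the pre-$\tau_n$ stage.

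The gap is in your treatment of $\tau\ge\tau_n$. You propose a \emph{second two-dimensional} strong approximation of $\Delta_\tau$ by a Gaussian of covariance $\mathbb G(\overrightarrow y(\tau/n),\tau/n)$, followed by the square-root coupling $\xi\mapsto(\Sigma^{1/2}\xi,\Sigma'^{1/2}\xi)$. But from the kernel \eqref{eq:2.1} one has $\Delta_{\tau,1}+\Delta_{\tau,2}=-q_0$ with $q_0-1\sim\mathrm{Bin}(k-1,\mathfrak p_0)$, so $\var(\Delta_{\tau,1}+\Delta_{\tau,2})=(k-1)\mathfrak p_0(1-\mathfrak p_0)$. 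Since $y_1(\theta_k,\rho_k)=0$ forces $\mathfrak p_0(\overrightarrow y(\theta_k,\rho_k),\theta_k)=0$, the matrix $\mathbb G(\overrightarrow y(\tau/n),\tau/n)$ is singular (or has smallest eigenvalue $O(n^{\beta-1})$) throughout $J_n$, with null direction $(1,1)$; and the target $\mathbb G'(\theta_k,\rho_k)$ is exactly rank one. The constants in Zaitsev-type theorems blow up with the ellipticity ratio, so the $O(\log n)$ bound you quote for the second strong-approximation step is not available here; the square-root coupling is likewise delicate when one matrix is rank-deficient.

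The paper resolves this by \emph{using} the degeneracy rather than fighting it. Because the $(1,1)$-component has variance $O(n^{\beta-1})$ per step, a direct Kolmogorov maximal inequality controls $\max_{\tau\in J_n}\bigl|\sum_{\tau_n\le t<\tau}\bigl((\Delta_{t,1}+\Delta_{t,2})-\E(\Delta_{t,1}+\Delta_{t,2})\bigr)\bigr|$ by $n^{\delta_1}$ with probability $1-O(n^{-\beta''})$, and the target process has zero variance in this direction by construction of $\mathbb G'$. After this reduction only a \emph{one-dimensional} Sakhanenko coupling for $\Delta_{\tau,1}$ is needed, whose variance $\mathbb G_{11}$ stays bounded below on $J_n$; a scalar rescaling then passes from $\mathbb G_{11}(\overrightarrow y(\tau/n),\tau/n)$ to $\mathbb G_{11}(\overrightarrow y(\theta_k),\theta_k)$ at the cost of another Kolmogorov bound. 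This decomposition into the degenerate direction and a genuinely one-dimensional strong approximation is the substantive idea missing from your post-$\tau_n$ argument.
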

\begin{proof}
The key to this lemma is the fact that under certain conditions we can couple of partial sums of independent random vectors with the same for independent 
gaussian random vectors having similar moments. For the $\tau < \tau_n$ part we recall from \eqref{eq:2.12} that $\overrightarrow z'(\tau_n) - \overrightarrow z'(0)$ can 
be written as a sum of independent random vectors. Then by a multidimensional version of a strong approximation result of Sakhanenko (see \cite[Theorem~1.2]{zaitsevI}), there exist a sequence of independent gaussian vectors $\Delta'_{\tau}$'s distributed like in \eqref{eq:Gauss1}, $c_2 = c_2(k) > 0$ and $\alpha_0 = \alpha_0(k)$ such that:
\begin{equation}
\label{eq:strong_approximate1}
\mathbb{P}(\parallel \sum_{\tau < \tau_n}(\Delta_\tau - \Delta_\tau')\parallel \geq c_2\log n) \leq \alpha_0 n^{-1}
\end{equation}
for $|\rho - \rho_k|\leq n^{\beta' - 1}$ and all $n$. Furthermore $\Delta_\tau'$'s are independent with 
$\overrightarrow{z}'(0)$. \cite[Theorem~1.2]{zaitsevI} requires some ellipticity conditions on the covariance matrices of the summands which follow in this case from the (easily verifiable) fact that $\mathfrak{p}_a(\overrightarrow{y}(\theta,\rho),\theta)$'s are uniformly bounded away from 0 when $0\leq\theta\leq\theta_k/2$ and $|\rho - \rho_k|\leq n^{\beta'-1}$.

For the $\tau \geq \tau_n$ part, first notice that $$\var(\Delta_{\tau, 1} + \Delta_{\tau, 2}) = (k-1)\mathfrak{p}_0(\tau/n,\rho)(1 - \mathfrak{p}_0(\tau/n,\rho))\,,$$
where $\Delta_{\tau} = 
(\Delta_{\tau, 1},\Delta_{\tau, 2})$. 
From part~(a), Proposition~\ref{prop:4.2} and Lemma~\ref{lem:4.1}, we get for $|\rho - \rho_k| \leq n^{\beta' - 1}$, $\tau\in J_n$ and all large $n$,
\begin{eqnarray*}
\mathfrak{p}_0(\tau/n,\rho) = \mathfrak{p}_0(\tau/n,\rho) - \mathfrak{p}_0(\theta_k,\rho_k) \leq C_6(|\theta_k - \tau/n| + |\rho - \rho_k|) \leq 2C_6n^{\beta-1}\,,
\end{eqnarray*}
where $C$ is a positive constant. Hence
$$\sum\limits_{\tau \in J_n}\var(\Delta_{\tau, 1} + \Delta_{\tau, 2}) \leq 2(k - 1)C_6n^{2\beta - 1}\,.$$
Now by Kolmogorov's maximal inequality
\begin{eqnarray}
\label{eq:strong_approximate2}
\mathbb{P}\bigg(\max_{\tau\in J_n}|\sum_{\tau_n \leq t < \tau}\big((\Delta_{t, 1} + \Delta_{t, 2}) - \mathbb{E}(\Delta_{t, 1} + \Delta_{t, 2})\big)|
\geq n^{\delta}\bigg) \leq 2(k-1)C_6n^{2\beta - 1 - 2\delta}\,.
\end{eqnarray}
So choosing $\delta_1 = \beta + \frac{\beta^{''}}{2} - \frac{1}{2}$ we incur an error of at most $n^{\delta_1}$ with probability at least $1 - 2C_6(k - 1)n^{-\beta^{''}}$ if we replace $\Delta_{\tau, 2}$ with $\mathbb{E}(\Delta_{\tau, 1} + \Delta_{\tau, 2}) - 
\Delta_{\tau, 1}$. Since $\Delta_{\tau_n + i, 1}$'s are independent and uniformly bounded by $4k$, we can apply Sakhanenko's refinement of the Hungarian construction (see \cite{sakhanenko1984, Shao95}) to deduce the existence of a sequence of independent gaussian variables $\{\Delta''_{\tau, 1}\}_{\tau \geq \tau_n}$'s such that 
$$\E {\Delta_{\tau, 1}''} = \E \Delta_{\tau, 1} = F_1(\overrightarrow{y}(\tau/n,\rho),\tau/n)\,, \mbox{ }\var {\Delta''_{\tau, 1}} = \var \Delta_{\tau, 1} = \mathbb{G}_{11}(\overrightarrow{y}(\tau/n,\rho),\tau/n)\,$$
and 
\begin{equation}
\label{eq:strong_approximate3}
\P \big(\sup_{\tau \in J_n} \big \lvert\sum_{\tau_n \leq t < \tau}({\Delta_{t, 1}''} - \Delta_{t, 1})\big \rvert
 \geq c_3\log n\big) \leq \alpha_1n^{-1}
\end{equation}
for some $c_3 = c_3(\beta, k) > 0$ and $\alpha_1 = \alpha_1(\beta, k)$.
$${\Delta_{\tau, 1}'} = \frac{\sqrt{\mathbb{G}_{11}(\overrightarrow{y}(\theta_k,\rho_k),\theta_k)}}{\sqrt{\mathbb{G}_{11}(\overrightarrow{y}(\tau/n,\rho),\tau/n)}}\big({\Delta_{\tau, 1}''} - F_1(\overrightarrow{y}(\tau/n,\rho), \tau/n)\big) + F_1(\overrightarrow{y}(\theta_k, \rho_k), \theta_k)\,,$$
and 
$${\Delta_{\tau, 2}'} = \E(\Delta_{\tau, 1} + \Delta_{\tau, 2}) - {\Delta_{\tau, 1}'}\,.$$
We claim that the process $\overrightarrow z''(\tau)$ with increments $\Delta_\tau' = (\Delta_{\tau, 1}', {\Delta_{\tau, 2}'})$ satisfy \eqref{eq:5.18_2} for 
$\delta_1, \beta''$ and a suitable choice of $\alpha$. We will justify this claim in two steps. First notice that, by Lipschitz continuity of $\mathbb G_{11}(., .)$, there exists a positive constant $C_7$ such that
\begin{eqnarray*}
\bigl\lvert\mathbb{G}_{11}(\overrightarrow{y}(\tau/n,\rho),\tau/n) - \mathbb{G}_{11}(\overrightarrow{y}(\theta_k,\rho_k),\theta_k)\bigr\rvert \leq C_7n^{\beta - 1}\,,
\end{eqnarray*}
for all $\tau \in J_n$ and $|\rho - \rho_k|\leq n^{\beta' - 1}$.
Writing
\begin{eqnarray*}
\Delta''_{\tau,1} - \E \Delta''_{\tau,1} &=& \frac{\sqrt{\mathbb{G}_{11}(\overrightarrow{y}(\theta_k,\rho_k),\theta_k)}}{\sqrt{\mathbb{G}_{11}(\overrightarrow{y}(\tau/n,\rho),\tau/n)}}(\Delta''_{\tau,1}-
\mathbb{E}\Delta''_{\tau,1}) + \Big(1-\frac{\sqrt{\mathbb{G}_{11}(\overrightarrow{y}(\theta_k,\rho_k),\theta)}}{\sqrt{\mathbb{G}_{11}(\overrightarrow{y}(\tau/n,\rho),\tau/n)}}\Big)
(\Delta''_{\tau,1} - \mathbb{E}\Delta''_{\tau,1}) \\ 
&=& \Delta'''_{\tau,1} + \Delta''''_{\tau,1}
\end{eqnarray*}
we notice that $\Delta'''_{\tau,1}$'s are independent Gaussian variables with mean 0 and variance $\leq C_8
n^{2\beta - 2}$ for some positive constant $C_8$. 
Hence by a similar application of Kolmogorov's inequality as in \eqref{eq:strong_approximate2} we get
\begin{equation}
\label{eq:strong_approximate4}
\mathbb{P}\big(\max_{\tau\in J_n}|\sum_{\tau_n \leq t < \tau} \Delta_{\tau, 1}''''|
\geq n^{\delta_1}\big) \leq 2(k-1)C_8n^{3\beta - 2 - 2\delta_1} \leq 2(k-1)C_8n^{-\beta''}\,.
\end{equation}
In view of \eqref{eq:strong_approximate4} and the definition of $\Delta_\tau'$, it only remains to bound the following:
$$\max_{\tau \in J_n}\big|\sum_{\tau_n \leq t < \tau}\big(F_1(\overrightarrow y(t/n, \rho), t/n) - F_1(\overrightarrow y(\theta_k, \rho_k), \theta_k)\big)\big|\,.$$
We can do this by applying Lemma~\ref{lem:4.1} and Proposition~\ref{prop:4.2}, part~(a), which gives us
\begin{equation}
\label{eq:strong_approximate5}
\max_{\tau \in J_n}\big|\sum_{\tau_n \leq t < \tau}\big(F_1(\overrightarrow y(t/n, \rho), t/n) - F_1(\overrightarrow y(\theta_k, \rho_k), \theta_k)\big)\big| \leq C_9n^{2\beta - 1} \leq C_9n^{\delta_1}\,,
\end{equation}
for $|\rho - \rho_k|\leq n^{\beta' - 1}$ and some positive constant $C_9$. \eqref{eq:5.18_2} now follows from combining the displays \eqref{eq:strong_approximate1}, \eqref{eq:strong_approximate2}, \eqref{eq:strong_approximate3}, \eqref{eq:strong_approximate4} and \eqref{eq:strong_approximate5}.
\end{proof}
We are interested in the behavior of $\{(\overrightarrow{z}^{''}(\tau), \tau)\}_{\tau \geq 
\tau_n}$. After a little reshuffling we get the following decent expression:
\begin{equation}
\label{eq:expr1}
\overrightarrow{z}^{''}(\tau) =   \overrightarrow{\xi}^*+ \big(S_{\tau},-S_{\tau}\big) + \big(\frac{\partial y_1}{\partial \theta}(\tau_n - n\theta_k),\frac{\partial y_2}{\partial \theta}(\tau_n - n\theta_k)\big) + \big(0,n(1-\theta_k)\big)
\end{equation}
where $S_{\tau} = \sum\limits_{\tau_n}^{\tau - 1}\Delta'_{t,1}$ and the process $\{S_{\tau}\}_{\tau \geq \tau_n}$ is independent of $\overrightarrow{\xi}^*$ which is defined as
$$\overrightarrow{\xi}^* = \overrightarrow{z}^{''}(\tau_n) - \big(\frac{\partial y_1}{\partial \theta}(\tau_n - n\theta_k),\frac{\partial y_2}{\partial \theta}(\tau_n - n\theta_k)\big) - \big(0,n(1-\theta_k)\big)\,.$$
Since we are dealing with first hitting times it is convenient to consider processes defined on $\R^+$. Also the presence of an implicit upper bound $n$ on $\tau$ is 
inconvenient. To get around these issues we first extend the definition of the processes $\{\overrightarrow{z}(\tau)\}$ and 
$\{\overrightarrow{z}^{''}(\tau)\}$ beyond $n$. For the process $\{\overrightarrow{z}(\tau)\}$ this is achieved by setting $\overrightarrow{z}(\tau)=0$ for $\tau \geq n$ and for $\{\overrightarrow{z}^{''}(\tau)\}$ we simply 
carry on \eqref{eq:Gauss2} for all $n$. Now we extend these two processes to all $t \in \R^+$ by linear 
interpolation. This minor change has some other technical 
advantages as well. Both the processes now almost surely 
hit the $x = 0$ line at finite time. Furthermore the minimum $\tau$ such that $z_1(\tau)=0$ is still $\tau_c$. 
Define $\tau^G_c$ for $\{\overrightarrow{z}^{''}\}$ as the first time $t \geq \tau_n$ such that $z^{''}(\tau) = 
0$ if $z^{''}(\tau_n) \geq 0$ and $\tau_n$ otherwise. The following lemma is an important intermediate step for our proof of Lemma~\ref{lem:6.2} which shows that $(z_2(\tau_c), \tau_c)$ and $(z_2(\tau_c^G), \tau_c^G)$ 
are pretty close. This lemma is similar in essence to \cite[Corollary~5.3]{DM08}.
\begin{lemma}
\label{cor:5.3}
Fix $\beta \in (\tfrac{1}{2},1)$ and $0 < \beta' < \beta$. 
Then there exist $C = C(\beta, \beta') > 0$ and $\eta = \eta(\beta) > 0$ such that
$$\widehat{\mathbb{P}}_{n,\rho}\big(\{\min\limits_{\tau\in[0,n\theta_k - n^\beta]}z_1(\tau)\leq n^{\beta'}\}\cup\{z_1(\lfloor n\theta_k+n^\beta\rfloor)\geq -n^{\beta'}\}\big) \leq Ce^{-n^{\eta}}\,,$$
all $n$ and $|\rho - \rho_k|\leq n^{\beta'-1}$.
\end{lemma}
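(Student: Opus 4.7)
The plan is to combine the exponential concentration of $\overrightarrow{z}(\tau)$ around $n\overrightarrow{y}(\tau/n,\rho)$ under $\widehat{\mathbb{P}}_{n,\rho}$ (Lemma~\ref{lem:concentration}(a) and (b)) with the behavior of the ODE solution $y_1$ near $\theta_k$ established in Proposition~\ref{prop:coro_crit}. The deterministic drift $ny_1(\tau/n,\rho)$ should dominate both $n^{\beta'}$ and the fluctuation scale $\sqrt{n\log n}$, and this separation of scales will produce a Gaussian-type tail of the desired order.

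First I would establish a one-sided Taylor estimate at the exact threshold. Since $y_1(\theta_k,\rho_k)=0$, $\frac{\partial y_1}{\partial \theta}(\theta_k,\rho_k) < 0$, and $\overrightarrow{y}(\cdot,\rho_k)$ is twice continuously differentiable on $[0,\theta_k]$ and on $[\theta_k,\theta_k+\epsilon]$ separately by Proposition~\ref{prop:coro_crit}(b)(III), there exist $c_0 > 0$ and $\delta > 0$ such that $y_1(\theta,\rho_k) \geq c_0(\theta_k - \theta)$ on $[\theta_k - \delta, \theta_k]$ and $y_1(\theta,\rho_k) \leq -c_0(\theta - \theta_k)$ on $[\theta_k, \theta_k+\delta]$; on $[0,\theta_k-\delta]$ the function $y_1(\cdot,\rho_k)$ is strictly positive and hence bounded below by a positive constant. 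The Lipschitz continuity of $(\theta,\rho)\mapsto \overrightarrow{y}$ on $[0,1-\epsilon)\times[\epsilon,1/\epsilon]$ from Proposition~\ref{prop:4.2}(a) then gives $|y_1(\theta,\rho) - y_1(\theta,\rho_k)| \leq L|\rho - \rho_k| \leq Ln^{\beta'-1}$ in the range of interest, so that $ny_1(\tau/n,\rho) \geq c_1 n^\beta$ whenever $\tau \leq n\theta_k - n^\beta$, and $ny_1(\tau/n,\rho) \leq -c_1 n^\beta$ at $\tau = \lfloor n\theta_k + n^\beta\rfloor$, for some $c_1 > 0$ and all large $n$; here $\beta' < \beta$ is used to absorb the $O(n^{\beta'})$ error from varying $\rho$ into the leading $n^\beta$ term.

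Next, Lemma~\ref{lem:concentration}(b) combined with Jensen's inequality yields $\|\mathbb{E}\overrightarrow{z}(\tau) - n\overrightarrow{y}(\tau/n)\| \leq C_1\sqrt{n\log n}$, which is $o(n^\beta)$ since $\beta > 1/2$. Consequently $\mathbb{E}z_1(\tau) \geq c_1 n^\beta/2$ on the first range and $\mathbb{E}z_1(\tau) \leq -c_1 n^\beta/2$ at the right endpoint for large $n$. The event $\{z_1(\tau) \leq n^{\beta'}\}$ then forces $|z_1(\tau) - \mathbb{E}z_1(\tau)| \geq c_1 n^\beta/4$ (using $\beta' < \beta$ once more), and Lemma~\ref{lem:concentration}(a) bounds its probability by $4\exp(-c_2 n^{2\beta - 1})$. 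The same estimate applies to $\{z_1(\lfloor n\theta_k + n^\beta\rfloor) \geq -n^{\beta'}\}$. A union bound over the at most $n$ integer times in $[0, n\theta_k - n^\beta]$ produces a total bound $\leq 4(n+1)\exp(-c_2 n^{2\beta-1})$, which is of the form $C\exp(-n^\eta)$ for any $\eta \in (0, 2\beta - 1)$ and all large $n$.

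The main subtlety is verifying that the Taylor estimates remain uniformly valid as $\rho$ varies in $[\rho_k - n^{\beta'-1}, \rho_k + n^{\beta'-1}]$, particularly near the non-smooth point $\theta_k$: one patches the two one-sided expansions from Proposition~\ref{prop:coro_crit}(b)(III), noting that the critical time $\theta_*(\rho)$ sits within $O(n^{\beta'-1})$ of $\theta_k$ by Lipschitz continuity, which is well inside $[\theta_k - n^{\beta-1}, \theta_k + n^{\beta-1}]$ since $\beta' < \beta$. Everything else is a routine application of the concentration inequalities already at our disposal.
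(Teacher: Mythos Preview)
Your proposal is correct and follows essentially the same route as the paper: both arguments establish $ny_1(\tau/n,\rho)\geq c_1 n^\beta$ on $[0,n\theta_k-n^\beta]$ (and $\leq -c_1 n^\beta$ at the right endpoint) via the one-sided derivative $\partial_\theta y_1(\theta_k,\rho_k)<0$ plus Lipschitz continuity in $\rho$, then feed this into Lemma~\ref{lem:concentration}(b) to control $\mathbb{E}z_1(\tau)$, apply the sub-Gaussian bound of Lemma~\ref{lem:concentration}(a), and take a union bound over $\tau$ to obtain the exponent $\eta\in(0,2\beta-1)$. The paper simply cites the proof of Lemma~\ref{lem:4.3} for the deterministic lower bound on $y_1$ rather than re-deriving the Taylor estimate as you do, but the content is the same.
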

\begin{proof}
We showed in the proof of Lemma~\ref{lem:4.3} that $y_1(\theta, \rho) \geq c_1 n^{\beta-1}$ for all $0\leq \theta \leq \theta_n$, $|\rho - \rho_k|\leq n^{\beta' - 
1}$ and large $n$. By part (b) of Lemma~\ref{lem:concentration} 
we then get that for some $C' = C'(\beta,\beta') > 0$ and $n$ large enough
\begin{equation*}
\mathbb{E}z_1(\tau) \geq ny_1(\tau/n,\rho) - C_1\sqrt{n\log{n}} \geq c_1 n^{\beta} - C_1\sqrt{n\log{n}}\geq C'n^{\beta}\,,
\end{equation*}
whenever $\tau\in[0,n\theta_k-n^\beta]$ and $|\rho - \rho_k|\leq n^{\beta' - 1}$. 
Now applying part~(a) of Lemma~\ref{lem:concentration}, we get that for any $\eta < (2\beta - 1)/2$, some $C''=C''(\beta, \beta', \eta) > 0$ and $n$ large enough
$$\widehat{\mathbb{P}}_{n,\rho}(z_1(\tau)\leq n^{\beta'})\leq\widehat{\mathbb{P}}_{n,\rho}(||\overrightarrow{z}(\tau) - \mathbb{E}\overrightarrow{z}(\tau)||\geq C'n^{\beta}/2)\leq C''e^{-n^{2\eta}},$$
whenever $\tau \in [0,n\theta_k-n^\beta]$ and $|\rho - \rho_k|\leq n^{\beta' - 1}$. 
Similarly we can derive a similar exponential bound on $\widehat{\mathbb{P}}_{n,\rho}\{z_1(\lfloor n\theta_k+n^\beta\rfloor)\geq -n^{\beta'}\}$. The result now follows by applying a union bound over $\tau \in [0,n\theta_k-n^\beta] \cup \{\lfloor n\theta_k+n^\beta\rfloor\}$.
\end{proof}
\begin{lemma}
\label{lem:6.2}
Fix $\beta \in (\frac{1}{2},\frac{3}{4})$ and 
$0 < \beta' < \beta$. There exist $0 < \delta_2 < \frac{1}{2}$ and $\delta_3 > 0$ depending only on $\beta, \beta'$, $B = B(\beta, \beta', k)> 0$ and a coupling of the processes $\{\overrightarrow{z}(\tau)\}_{\tau \in \mathbb{R}^+}$ and $\{\overrightarrow{z}^{''}(\tau)\}_{\tau \in \mathbb{R}^+}$, such that for all $n$ and $|\rho - \rho_k|\leq n^{\beta' - 1}$,
\begin{equation*}
\P\big(z^{''}_1(\tau^G_c) = 0, |\tau_c - \tau^G_c| + |z_2(\tau_c) - z_2^{''}(\tau^G_c)|\leq n^{\delta_2}\big) \geq  1 - Bn^{-\delta_3}\,.
\end{equation*}
\end{lemma}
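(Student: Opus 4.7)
The plan is to chain the three couplings established earlier—Corollary~\ref{cor:5.5}, Proposition~\ref{prop:5.5} and Lemma~\ref{lem:6.1}—into one joint coupling of $\{\overrightarrow{z}(\tau)\}$ (under $\mathbb{P}_{n,\rho}$) and $\{\overrightarrow{z}''(\tau)\}$ (under $\overline{\mathbb{P}}_{n,\rho}^{G}$), and then to extract $|\tau_c-\tau_c^G|$ from a quasi-linear picture of $z_1''$ on $J_n$. First, concatenate the three couplings: Corollary~\ref{cor:5.5} gives a coupling with error $\leq A\log n$ (probability $\geq 1-Cn^{-1}$) between $\overrightarrow{z}$ and a process of law $\widehat{\mathbb{P}}_{n,\rho}$; Proposition~\ref{prop:5.5} then couples this with $\overrightarrow{z}'\sim\overline{\mathbb{P}}_{n,\rho}$ with error $\leq cn^{\delta}$ (probability $\geq 1-\alpha n^{-1}$); and Lemma~\ref{lem:6.1} couples $\overrightarrow{z}'$ with $\overrightarrow{z}''\sim\overline{\mathbb{P}}^{G}_{n,\rho}$ with error $\leq n^{\delta_1}$ (probability $\geq 1-\alpha n^{-\beta''}$). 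Picking $\delta_1''$ slightly above $\max(\delta,\delta_1,\beta/2)$ (still $<\tfrac12$, using $\beta<\tfrac34$, $\delta<\tfrac12$, $\delta_1<\tfrac12$) therefore yields
\begin{equation*}
\sup_{\tau\in J_n}\|\overrightarrow{z}(\tau)-\overrightarrow{z}''(\tau)\|\leq n^{\delta_1''}\quad\text{with probability}\geq 1-C_{11}n^{-\beta''}.
\end{equation*}
Combining Lemma~\ref{cor:5.3} with this coupling pins $\tau_c,\tau_c^G\in J_n$ with the same probability; the estimate $\|\mathbb{Q}_{\tau_n}\|=O(1)$ from Lemma~\ref{lem:4.3}, together with $\mathbb{E}z_1''(\tau_n)\approx|y_1'(\theta_k)|n^{\beta}$, moreover forces $z_1''(\tau_n)>0$ and hence $\tau_c^G>\tau_n$ with high probability.

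Next I would establish a quasi-linear profile for $z_1''$. Under~\eqref{eq:Gauss2} the increments of $z_1''$ for $\tau\geq\tau_n$ are i.i.d.\ Gaussians of mean $y_1'(\theta_k)<0$ and variance $\mathbb{G}_{11}(\overrightarrow{y}(\theta_k,\rho_k),\theta_k)$, so
\begin{equation*}
z_1''(\tau)=z_1''(\tau_n)+y_1'(\theta_k)(\tau-\tau_n)+M_{\tau-\tau_n},
\end{equation*}
with $M_s$ a centered Gaussian random walk whose supremum over $[\tau_n,\tau_n+2n^{\beta}]$ is, by Kolmogorov's maximal inequality, at most $C_{12}\sqrt{n^{\beta}\log n}$ with probability $\geq 1-n^{-2}$. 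Since $z_1''(\tau_c^G)=0$ by continuity of the linearly-interpolated process, this yields, for every $\tau\in[\tau_n,\tau_n+2n^{\beta}]$,
\begin{equation*}
|z_1''(\tau)|\geq|y_1'(\theta_k)|\,|\tau-\tau_c^G|-2C_{12}\sqrt{n^{\beta}\log n}.
\end{equation*}

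Setting $L:=4\bigl(n^{\delta_1''}+2C_{12}\sqrt{n^{\beta}\log n}\bigr)/|y_1'(\theta_k)|$, the above gives $|z_1''(\tau_c^G\pm L)|\geq 2n^{\delta_1''}$ with the sign of $-y_1'(\theta_k)(\tau-\tau_c^G)$; the coupling then upgrades this to $z_1(\tau_c^G-L)>0$ and $z_1(\tau_c^G+L)<0$, forcing $\tau_c\in(\tau_c^G-L,\tau_c^G+L)$. Since $\delta_1''>\beta/2$, $\sqrt{n^{\beta}\log n}=o(n^{\delta_1''})$, so $L\leq n^{\delta_2}$ for any $\delta_2>\delta_1''$, still $<\tfrac12$. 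The identity $z_1''(\tau_c^G)=0$ is automatic once $\tau_c^G>\tau_n$. For the second coordinate I decompose
\begin{equation*}
z_2(\tau_c)-z_2''(\tau_c^G)=[z_2(\tau_c)-z_2''(\tau_c)]+[z_2''(\tau_c)-z_2''(\tau_c^G)];
\end{equation*}
the first bracket is $\leq n^{\delta_1''}$ by the coupling, and the second is a sum of at most $n^{\delta_2}$ i.i.d.\ Gaussian increments with $O(1)$ mean and variance, hence $O(n^{\delta_2})$ with probability $\geq 1-n^{-1}$. The final $\delta_3$ is the minimum of $\beta''$ and the failure exponents above.

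The main obstacle is the slope step: the linear drift $|y_1'(\theta_k)|L$ must dominate simultaneously (i) the coupling error $n^{\delta_1''}$, (ii) the intrinsic Gaussian fluctuation $O(\sqrt{n^{\beta}\log n})$ of $z_1''$, and (iii) stay of order $L\leq n^{\delta_2}$ with $\delta_2<\tfrac12$. The restriction $\beta\in(\tfrac12,\tfrac34)$ (so $\beta/2<\tfrac38$) together with the freedom to take $\delta_1''$ strictly below $\tfrac12$ leaves a narrow but non-empty window in which all three constraints are met; the parameters $\beta,\beta',\beta'',\delta,\delta_1$ inherited from the preceding lemmas must be tuned to remain in this window.
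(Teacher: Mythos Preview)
Your proposal is correct and follows the same overall architecture as the paper: chain the couplings of Corollary~\ref{cor:5.5}, Proposition~\ref{prop:5.5} and Lemma~\ref{lem:6.1}; use Lemma~\ref{cor:5.3} to localize both hitting times in $J_n$ and to ensure $z_1''(\tau_n)>0$; then exploit the strictly negative drift $y_1'(\theta_k)<0$ together with a fluctuation bound on the Gaussian increments to squeeze $|\tau_c-\tau_c^G|$, and finish by decomposing the $z_2$ error.

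The one genuine difference is in the fluctuation step. You bound the centered walk $M_s$ globally over $J_n$ by $O(\sqrt{n^\beta\log n})$ and then compare the linear drift against this global oscillation; this is more elementary and perfectly adequate here since $\beta/2<1/2$. The paper instead controls, uniformly over $\tau\in J_n$, the increment $z_1''(\tau+n^{\delta_2})-z_1''(\tau)$ minus its mean by $n^{3\delta_2/4}$ via Dudley's entropy bound and Gaussian concentration (display~\eqref{eq:time_bound3}), i.e.\ it works with short windows rather than the full interval. Both routes yield $|\tau_c-\tau_c^G|\le n^{\delta_2}$ with some $\delta_2<\tfrac12$, so neither buys a stronger conclusion; your version simply avoids the heavier machinery. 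One small correction: the bound you want on $\max_s|M_s|$ with failure probability $n^{-2}$ does \emph{not} follow from Kolmogorov's maximal inequality (which only gives $O(1/\log n)$ at the scale $\sqrt{n^\beta\log n}$); you need a Gaussian tail bound plus a union bound over the $O(n^\beta)$ time points, or a Gaussian maximal inequality. This is a naming issue rather than a gap.
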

\begin{proof}
Since $z_1^{''}(\tau) - z_1^{''}(\tau_n)$ is a sum of i.i.d random variables with negative mean (see Proposition~\ref{prop:coro_crit}), it is clear 
that $\tau^G_c$ is almost surely finite. Thus
$$\P(z_1^{''}(\tau_n) \geq 0, z_1^{''}(\tau^G_c) \neq 0) = 0\,.$$
Now we couple the processes $\{\overrightarrow{z}(\tau)\}_{\tau \geq 0}$ and $\{\overrightarrow{z}^{''}(\tau)\}_{\tau \geq 0}$ by joining the couplings of Lemma~\ref{lem:6.1}, Proposition~\ref{prop:5.5} and Corollary~\ref{cor:5.5}. 
We can extend this coupling to all $t \in \R^+$ by linear 
interpolation. From the same results we then get the numbers $0 < \delta_2' = \delta_2'(\beta, \beta') < \frac{1}{2}$, $\delta_3' = \delta_3'(\beta, \beta') > 0$ and $B' = B'(\beta, \beta', k)$, such that
\begin{equation}
\label{eq:time_bound1}
\P \big(\sup\limits_{\tau \in J_n}\parallel\overrightarrow{z}(\tau) - \overrightarrow{z}^{''}(\tau)\parallel \geq n^{\delta_2'}\big) \leq B'n^{-\delta_3'}\,,
\end{equation}
for all $n$ and $|\rho - \rho_k|\leq n^{\beta' - 1}$. \eqref{eq:time_bound1} combined with 
Corollary~\ref{cor:5.5} and Lemma~\ref{cor:5.3} gives us
\begin{equation}
\label{eq:time_bound2}
\P \big(z_1''(\tau_n) \geq 0, \tau_c \in J_n, \tau_c^G \in J_n\big) \geq 1 - B''n^{-\delta_3'}\,,
\end{equation}
for all $n$, $|\rho - \rho_k|\leq n^{\beta' - 1}$ and 
some $B'' = B''(\beta, \beta', k)$. We still need one 
more ingredient. Since $\Delta_\tau''$'s are i.i.d. Gaussian variables, it follows by Dudley's entropy bound on the supremum of a Gaussian process (see, e.g.,  \cite[Theorem~4.1]{A90}) and Gaussian concentration inequality (see e.g., \cite[Equation (7.4), 
Theorem~7.1]{L01}) that
\begin{equation}
\label{eq:time_bound3}
\P \Big(\max_{\tau \in J_n}\big(z_1''(\tau_{\delta_2, +}) - z_1''(\tau) - \E (z_1''(\tau_{\delta_2, +}) - z_1''(\tau))\big) \geq n^{3\delta_2/4}\Big) \leq B'''n^{-1}\,,
\end{equation}
for some $B''' = B'''(\beta, \beta', k)$ and $\tau_{\delta_2, +} = (\tau + n^{\delta_2}) \wedge 
(n\theta_k + n^{\beta})$. But 
$$\E (z_1''(\tau + n^{\delta_2}) - z_1''(\tau)) = n^{\delta_2}F_1(\overrightarrow y(\theta_k ,\rho_k), \theta_k)\,,$$
where $F_1(\overrightarrow y(\theta_k ,\rho_k), \theta_k) 
< 0$. The lemma now follows from this fact together with \eqref{eq:time_bound1}, \eqref{eq:time_bound2} and \eqref{eq:time_bound3}.
\end{proof}
Now let us revisit \eqref{eq:expr1}. A routine algebra yields that when $z_1^{''}(\tau^G_c) = 0$, we have
\begin{equation}
\label{eq:conv_expr}
z_2^{''}(\tau^G_c) - (n - \tau^G_c) = (\xi^*_1 + \xi^*_2)\,,
\end{equation}
where $\overrightarrow{\xi}^* = (\xi_1^*, \xi_2^*)$. The following is an immediate consequence of Lemma~\ref{lem:6.2} and \eqref{eq:conv_expr}.
\begin{lemma}
\label{lem:6.3}
Fix $\beta \in (\frac{1}{2},\frac{3}{4})$ and 
$\frac{1}{2} < \beta' < \beta$. There exist $0 < \delta_2 < \frac{1}{2}$ and $\delta_3 > 0$ depending only on $\beta, \beta'$, $B_1 = B_1(\beta, \beta', k) > 0$ such that for all $n$, $A > 0$ and $|\rho - \rho_k|\leq n^{\beta' - 1}$,
\begin{eqnarray*}
\overline{\mathbb{P}}^G_{n,\rho}\big((\xi^*_1 + \xi^*_2) \geq A\log n + n^{\delta_2}\big) - B_1n^{-\delta_3} &\leq& \mathbb{P}_{n,\rho}\big(z_2(\tau_c) - (n - \tau_c) \geq A\log n\big)\\ &\leq&  \overline{\mathbb{P}}^G_{n,\rho}\big((\xi^*_1 + \xi^*_2) \geq A\log n - n^{\delta_2}\big) + B_1n^{-\delta_3}\,,
\end{eqnarray*}
and
\begin{eqnarray*}
\overline{\mathbb{P}}^G_{n,\rho}\big((\xi^*_1 + \xi^*_2) \leq -A\log n - n^{\delta_2}\big) - B_1n^{-\delta_3}&\leq& \mathbb{P}_{n,\rho}\big(z_2(\tau_c) - (n - \tau_c) \leq -A\log n\big)\\ &\leq&  \overline{\mathbb{P}}^G_{n,\rho}\big((\xi^*_1 + \xi^*_2) \leq -A\log n + n^{\delta_2}\big) + B_1n^{-\delta_3}\,.
\end{eqnarray*}
\end{lemma}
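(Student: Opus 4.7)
The plan is to combine the coupling provided by Lemma~\ref{lem:6.2} with the exact identity~\eqref{eq:conv_expr}, which converts $z_2''(\tau_c^G) - (n - \tau_c^G)$ into $\xi_1^* + \xi_2^*$ on the event $\{z_1''(\tau_c^G) = 0\}$. This reduces the lemma to a routine ``tail transfer'' computation: on the coupling's good event, the target quantity $z_2(\tau_c) - (n-\tau_c)$ and the Gaussian functional $\xi_1^* + \xi_2^*$ differ by at most $n^{\delta_2}$, so their tail probabilities agree up to a shift of $\pm n^{\delta_2}$ in the threshold and a probability defect of order $n^{-\delta_3}$.

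First I would fix $\beta \in (1/2, 3/4)$ and $1/2 < \beta' < \beta$ and invoke Lemma~\ref{lem:6.2} to obtain a joint construction of $\{\overrightarrow{z}(\tau)\}$ and $\{\overrightarrow{z}''(\tau)\}$, together with constants $\delta_2 \in (0, 1/2)$, $\delta_3 > 0$ and $B = B(\beta, \beta', k) > 0$, such that the event
$$\mathcal{E} \equiv \{z_1''(\tau_c^G) = 0\} \cap \{|\tau_c - \tau_c^G| + |z_2(\tau_c) - z_2''(\tau_c^G)| \leq n^{\delta_2}\}$$
has probability at least $1 - B n^{-\delta_3}$, uniformly over $|\rho - \rho_k| \leq n^{\beta' - 1}$.

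Next, on $\mathcal{E}$ identity~\eqref{eq:conv_expr} gives $z_2''(\tau_c^G) - (n - \tau_c^G) = \xi_1^* + \xi_2^*$, so the triangle inequality yields
$$\bigl| (z_2(\tau_c) - (n - \tau_c)) - (\xi_1^* + \xi_2^*) \bigr| \leq |z_2(\tau_c) - z_2''(\tau_c^G)| + |\tau_c - \tau_c^G| \leq n^{\delta_2}.$$
Consequently, on $\mathcal{E}$ one has the nested inclusions
$$\{\xi_1^* + \xi_2^* \geq A\log n + n^{\delta_2}\} \subseteq \{z_2(\tau_c) - (n - \tau_c) \geq A\log n\} \subseteq \{\xi_1^* + \xi_2^* \geq A\log n - n^{\delta_2}\},$$
and analogous inclusions for the complementary events $\{\,\cdot \leq -A\log n\,\}$, obtained by flipping the sign of the buffer.

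Finally, taking probabilities of each inclusion, decomposing over $\mathcal{E}$ versus $\mathcal{E}^c$ and bounding $\mathbb{P}(\mathcal{E}^c) \leq B n^{-\delta_3}$, delivers both pairs of sandwich inequalities in the lemma with $B_1 = B$. I do not anticipate any real obstacle: Lemma~\ref{lem:6.2} has already done the heavy lifting by coupling the peeling process to the Gaussian surrogate with explicit polynomial error, so the only care required is keeping track of which direction of inequality introduces the $+n^{\delta_2}$ versus $-n^{\delta_2}$ buffer, and this is automatic from the sandwich above.
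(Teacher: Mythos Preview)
Your proposal is correct and is exactly the approach the paper takes: the paper states that Lemma~\ref{lem:6.3} is an immediate consequence of Lemma~\ref{lem:6.2} and identity~\eqref{eq:conv_expr}, and you have simply written out this immediate consequence in detail via the good-event sandwich argument.
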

\emph{Proof of Proposition~\ref{prop:prep}:} Notice that,
$$\overrightarrow{\xi}^* = \big(\overrightarrow{z}^{''}(\tau_n) - \widetilde{\mathbb{B}}_0^{\tau_n - 1}\overrightarrow{z}(0)\big) - \big(\frac{\partial y_1}{\partial \theta}(\tau_n - n\theta_k),\frac{\partial y_2}{\partial \theta}(\tau_n - n\theta_k)\big) - \big(0,n(1-\theta_k)\big) + \widetilde{\mathbb{B}}_0^{\tau_n - 1}\overrightarrow{z}(0)\,.$$
From \eqref{eq:2.14} and the coupling defined 
in Lemma~\ref{lem:6.1} we can see that $\overrightarrow{z}^{''}(\tau_n) - \widetilde{\mathbb{B}}_0^{\tau_n - 1}\overrightarrow{z}(0)$ and $\overrightarrow{z}(0)$ are 
independent. Also from \eqref{eq:Gauss1}, \eqref{eq:4.7} and \eqref{eq:4.8} we have
$$\overrightarrow{z}^{''}(\tau_n) - \widetilde{\mathbb{B}}_{0}^{\tau_n - 1}\overrightarrow{z}(0) \myeq \mathscr G_2\Big(. | n\big(\overrightarrow{y}^*(\tau_n) - \widetilde{\mathbb{B}}_{0}^{\tau_n - 1}\overrightarrow{y}(0,\rho)\big), n\big(\mathbb{Q}_{\tau_n} - \widetilde{\mathbb{B}}_{0}^{\tau_n - 1}\mathbb{Q}_{\tau_n}(\widetilde{\mathbb{B}}_{0}^{\tau_n - 1})^T\big)\Big)\,.$$
Hence by Lemma~\ref{lem:normal_approx} we get for some $\kappa_3 > 0$,
\begin{eqnarray*}
\sup\limits_{x\in \mathbb{R}}\bigl\lvert\mathbb{P}(\xi^*_1 + \xi^*_2\leq x)-\int\limits_{z_1 + z_2\leq x}\phi_2(\overrightarrow{z}|n\overrightarrow{y}^*(\tau_n);n\mathbb{Q}(\tau_n))d\overrightarrow{z}\bigr\rvert \leq \kappa_3n^{-1/2}.
\end{eqnarray*}
On the other hand Lemma~\ref{lem:4.3} tells us
$$\parallel n\mathbb{Q}_{\tau_n} -n\mathbb{Q}_{\theta_k,\rho_k}\parallel \leq Cn^{\beta}\,,$$ and
\begin{eqnarray*}
&&\parallel n\overrightarrow{y}^*(\tau_n) - n\big((\rho - \rho_k)\frac{\partial{y_1}}{\partial{\rho}}(\theta_k,\rho_k),(\rho - \rho_k)\frac{\partial{y_2}}{\partial{\rho}}(\theta_k,\rho_k)\big) - \big(\frac{\partial y_1}{\partial \theta}(\tau_n - n\theta_k),\frac{\partial y_2}{\partial \theta}(\tau_n - n\theta_k)\big)\\
&& - \big(0,n(1-\theta_k)\big)\parallel \leq 2Cn^{2\beta - 1}
\end{eqnarray*}
for some $C = C(\beta, \beta')$. Now using formula for the Kullback-Leibler divergence between two multivariate Gaussian distributions and Pinsker's inequality (see, e.g., \cite[p.~132]{tsybakov2009})
we get for $\rho = \rho_k + rn^{-1/2}$,
\begin{equation}
\label{eq:fin_cut}
\parallel\mathcal{L}\big(\sqrt{n}(\xi_1(r) + \xi_2(r))\big) - \mathcal{L}(\xi^*_1 + \xi^*_2)\parallel_{TV} \leq \max(n^{(4\beta - 3)/2},n^{(\beta - 1)/2}),
\end{equation}
where $\overrightarrow{\xi}(r)$ is same as in 
Proposition~\ref{prop:prep} and $\mathcal L(X)$ denotes 
the law of the random variable $X$. The proof now follows from \eqref{eq:fin_cut} and Lemma~\ref{lem:6.3}.

\end{document}